\theoremstyle{plain}
\newtheorem{theorem}{Theorem}
\newtheorem{proposition}{Proposition}
\theoremstyle{definition}
\newtheorem{definition}{Definition}
\newtheorem{remark}{Remark}
\newtheorem*{acknowledgements}{Acknowledgements}
\newtheorem{thmy}{Theorem}
\def \r{\mbox{${\mathbb R}$}}
\renewcommand{\Re}{\mathcal Re}
\begin{document}

\title{A family of higher genus complete minimal surfaces that includes the Costa-Hoffman-Meeks one}

\author{Irene I. Onnis}
\address{Universit\`a degli Studi di Cagliari\\
Dipartimento di Matematica e Informatica\\
Via Ospedale 72\\
09124 Cagliari, Italy.}
\email{irene.onnis@unica.it}

\author{B\'arbara C. Val\'erio}
\address{Instituto de Matem\'atica e Estat\'istica\\
              Universidade de S\~ao Paulo\\
              05508-090 S\~ao Paulo, SP, Brazil. }
\email{barbarav@ime.usp.br}

\author{Jos\'{e} Antonio M. Vilhena}
\address{Universidade Federal do Par\'a\\ 
Instituto de Ci\^encias Exatas e Naturais\\ Faculdade de Matem\'atica\\ Rua Augusto Corr\^ea 01, 66075-110 Bel\'{e}m, PA,  Brazil.}
\email{vilhena@ufpa.br}

\date{March 2023}
\subjclass{53A10; 53C42}
\keywords{Examples of minimal surfaces. Elliptic functions. Riemann surface. Planar-type end. Catenoid-type end. Enneper-type end. }
\thanks{The first author was supported by 
 GNSAGA-INdAM,  by a grant of FdS (Projects GOACT and ISI-HOMOS), by the Thematic Project: Topologia Álgebrica,  Geométrica e Diferencial, Fapesp process number 2022/16455-6 and partially funded by PNRR e.INS Ecosystem of Innovation for Next Generation Sardinia. }

\begin{abstract}
In this paper, we construct a one-parameter family of minimal surfaces in the Euclidean $3$-space of arbitrarily high genus and with three ends.   Each member of this family is immersed, complete and with finite total curvature.  Another interesting property is that the symmetry group of the genus $k$ surfaces $\Sigma_{k,t}$ is the dihedral group with $4(k+1)$ elements.  Moreover, in particular,  for $|t|=1$ we find the family of the Costa-Hoffman-Meeks embedded minimal surfaces,  which have two catenoidal ends and a middle flat end.  Among the non-embedded examples obtained,  there are noncongruent minimal surfaces,  with the same symmetry group and conformal structure, as we have in \cite{Batista.2004}.
\end{abstract}
\maketitle

\section{Introduction}

The Enneper surface is, after the plane, the simplest complete minimal surface in the three-dimensional Euclidean space.  It has genus zero and total  Gauss curvature equal to $-4\pi$.  In \cite{Chen.1982} C.C.  Chen and F.   Gackstatter constructed the first examples of complete minimal surfaces of genus one and two,  with one Enneper-type end of winding order three,  the same symmetries as the Enneper surface and total curvature $-8\pi$ and $-12\pi$, respectively.   In 1994 N. Do Espirito-Santo (\cite{Nedir.1994}) showed the existence of a complete minimal surface immersed in $\r^3$ of genus three,  having one Enneper-type end and total curvature $-16\pi$.  In \cite{Karcher.1989} H.  Karcher generalized the genus one Chen-Gackstatter surface by increasing the genus and the winding order of the end.  
 A similar generalization of the genus two Chen-Gackstatter surface was obtained by C.E.  Thayer in \cite{Thayer.1995},  solving numerically the associated period problem for genus as high as $35$.  Other generalizations of these Enneper-type minimal surfaces are given in \cite{Fang.1990} and \cite{Kang.2003}.\\

For what concerns minimal surfaces of genus one,  in 1982 C.  Costa has constructed a complete minimal surface of genus one in $\mathbb{R}^3$ with three embedded ends (see \cite{Costa.1982,  Costa.1984}).  Later,   D.  Hoffman and W.H.  Meeks in  \cite{Hoffman.1985}  have proved its embeddedness. In  \cite{Hoffman.1990}, they extended those results to higher genus surfaces, proving the existence of an infinite family  of embedded minimal surfaces $M_k$ of genus $k \geq 1$, with two catenoidal ends and one planar middle end (see the Main Theorem,  p. ~$1$). The total curvature of $M_k$ is $-4\pi(k+2)$ and its symmetry group is the dihedral group $\mathcal{D}(2k+2)$ with $4(k+1)$ elements generated by the orthogonal transformations
\begin{equation}\label{matrix}
	K=\left[
	\begin{array}{ccc}
	1&0&0\\
	0&-1&0\\
	0&0&1
	\end{array}
	\right]\qquad\text{and}\qquad
L_{\theta} = \left[
	\begin{array}{ccc}
	\mathcal{R}_{\theta}& &0\\
	 & &0\\
	0&0&-1
	\end{array}
	\right],
\end{equation}
where $\mathcal{R}_{\theta}$ represents the matrix of a rotation by $\theta=\pi/(k+1)$ radians in the $(x_1,x_2)$-plane.  The surface $M_k$ is known as ``Costa-Hoffman-Meeks surface'' of genus $k$.  \\

Many interesting new examples of minimal surfaces have been obtained by combining the examples above. In \cite{Batista.2004},  V.  Ramos  Batista has introduced minimal immersions of punctured compact Riemann surfaces in the Euclidean $3$-space  called ``{\it birdcage-catenoids}'',  whose construction is indirectly based on the Costa surface.  In fact,  we can think of the ``birdcage'' of genus $k-1$, with $k\geq 3$,  as $M_k$ with each end compactified to a point.  Via these examples the author has showed  that the symmetry group and the conformal structure together of minimal surfaces in $\r^3$, with finite total curvature and positive genus, are insufficient to characterize the surfaces. \\

In the beautiful work \cite{Batista.2006}  from F.   Martín and V.  Ramos Bastista, the Costa and Scherk’s minimal surfaces have  been combined to obtain new example of embedded singly periodic minimal surfaces,  called {\it Scherk-Costa surfaces} (SC-surfaces, in short). 
The construction of the fundamental piece of such surfaces follows by replacing each
end of the Costa surface by Scherk-type ends, preserving the straight lines crossing at the ``Costa-saddle'.  By successive $180^\circ$-rotations on the straight lines, one obtains the whole singly periodic SC-surface.\\

Inspired by the afore mentioned results, Section 3 of this paper uses the theory of elliptic functions and the Enneper-Weierstrass Representation Theorem to prove the existence of a one-parameter family of minimal surfaces of genus one immersed in $\mathbb{R}^3$, having the Costa surface as a distinguished member. The other ones are obtained replacing its catenoidal ends by Enneper type ends. Furthermore, all those surfaces $S^1_x$ have the same symmetries as the Costa surface.   Namely, part of Theorem~\ref{teo3} may be restated as follows:

\begin{thmy}\label{TeoA}
\textit{There exists a one-parameter family of complete, genus one,   
  minimal surfaces which are immersed in $\mathbb{R}^3$,  with  three ends and finite total curvature.  The family depends on a parameter $x$ with $|x| < \sqrt{\sqrt{8}-1}\, e_1$,  where $e_1=\wp(1/2)$ and $\wp$ is the Weierstrass function.  Moreover,  if $|x|=e_1$ one gets the Costa surface and if $|x|\neq e_1$,  the corresponding surfaces have total curvature equal to $-20 \pi$, two Enneper-type ends and one middle planar end.}
  \end{thmy}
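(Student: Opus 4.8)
The plan is to construct the surface explicitly via the Enneper–Weierstrass representation on a genus-one Riemann surface and then verify completeness, the total curvature value, and the nature of the three ends. Let me think about how this would go.

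The Costa surface is built on the square torus, using the Weierstrass $\wp$ function. The standard approach: take the torus $\mathbb{C}/\Lambda$ with $\Lambda$ the square lattice (half-periods involving $1/2$ and the imaginary half-period), and $\wp$ the associated Weierstrass function with $e_1 = \wp(1/2)$. The Costa surface uses Weierstrass data where the Gauss map $g$ is essentially proportional to $1/\wp'$ or $\wp'$ and $\eta = \wp\, dz$ or similar. Actually the classical Costa data is $g = c/\wp'$ and $dh = \wp\, dz$ roughly (up to normalization), giving three points where things blow up: the two half-period-type points (catenoid ends) and the point at the lattice origin (the flat/planar end).

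So to introduce a parameter $x$ and replace catenoidal ends by Enneper-type ends, the natural modification is to change the Gauss map so that at the end points the pole/zero orders of $g$ and $dh$ produce Enneper-type (winding order $> 1$, or higher-order) behavior rather than the simple catenoid asymptotics. The parameter $x$ would enter as a constant in the Gauss map, something like $g = (x + \text{something})/\wp'$ or by shifting the poles/zeros. The claim that $|x| = e_1$ recovers the Costa surface suggests $x$ multiplies or adds to a term calibrated against $e_1$.

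Let me think about what the theorem demands and how I'd prove each piece.

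I'd structure the proof as follows.

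First, fix the Weierstrass data. I would take the square torus $T = \mathbb{C}/\Lambda$ and write down the meromorphic one-form $\eta = g\, dh$ (or the pair $(g, dh)$) explicitly in terms of $\wp$, $\wp'$, and the real parameter region $|x| < \sqrt{\sqrt{8}-1}\, e_1$. The three ends correspond to three distinguished points on $T$: I would identify these as the two points where $\wp' = 0$ away from origin (or the two "catenoidal" lattice points) and the origin. I need the explicit divisor of $g$ and $dh$ at these three points so I can read off, via the usual correspondence, whether each end is planar, catenoid-type, or Enneper-type.

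Second, verify the regularity of the immersion (no branch points). This means checking that $dh$ has zeros exactly where $g$ has zeros or poles, and with matching orders, so that the induced metric $ds = \frac{1}{2}(|g| + |g|^{-1})|dh|$ is nondegenerate on $T$ minus the ends. This is the first place where the parameter range $|x| < \sqrt{\sqrt{8}-1}\, e_1$ should enter: I expect the bound guarantees that the extra zeros of the modified numerator of $g$ do not collide with the poles of $dh$ in a way that creates a branch point, i.e.\ the constraint keeps the "bad" zeros off the distinguished points. I would solve for where $g$ vanishes or blows up and confirm these are regular points of the metric precisely under the stated inequality.

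Third, and most importantly, solve the period problem. For the map $F(p) = \Re \int^p \left( \tfrac{1}{2}(g^{-1} - g),\, \tfrac{i}{2}(g^{-1} + g),\, 1 \right) dh$ to be well defined on $T$ minus the ends, all real periods over the homology of the punctured torus must vanish. I would exploit the symmetry group — the dihedral symmetry with the reflections $K$ and rotation $L_\theta$ forces many periods to vanish automatically. Concretely, I would show the two generating cycles of the torus and the small loops around the ends all have vanishing real periods, reducing the problem to one or two real equations, which the symmetry (the surface is invariant under the reflection fixing a coordinate plane) then kills. The residues of $\eta$ at the end points must be real/imaginary in the right way (the "catenoidal" logarithmic terms), and at the planar end the residue must vanish.

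Fourth, compute the total curvature and classify the ends. By the standard formula, the total curvature equals $-2\pi \deg(g) = -4\pi\, (\text{number of points where } g \text{ has a pole, counted with multiplicity})$ when the Gauss map extends to the compactified surface, or more precisely $-4\pi$ times the degree of $g$ as a branched cover. I would show $\deg g = 5$ for $|x| \neq e_1$, yielding total curvature $-20\pi$, consistent with Jorge–Meeks: for genus $1$ with three ends, $\int K = -2\pi(2\cdot 1 - 2 + 3 + \sum(w_j + 1))$ type count. For each end I'd examine the local expansion of $g$ and $dh$: a planar end where $g$ has a simple zero/pole and $dh$ a double zero/pole gives the flat middle end; ends where $g$ has higher winding give the Enneper-type asymptotic behavior (winding order $\geq 2$) replacing the catenoid ends. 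The degeneration $|x| \to e_1$ should collapse the Enneper ends into catenoidal ends, recovering Costa and consistency with its total curvature $-12\pi$ — here I would note the total curvature jumps, which is why Costa ($|x|=e_1$) is a special limiting value.

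I expect the main obstacle to be the period problem, specifically showing that the real periods vanish for the entire open parameter range $|x| < \sqrt{\sqrt{8}-1}\, e_1$ and not just at isolated values. The dihedral symmetry should force most periods to vanish, but the remaining real period equation likely involves an integral of an elliptic expression in $x$, and proving it vanishes identically in $x$ (rather than solving it numerically as Thayer did) is the delicate point; I would try to express that period as a residue or as an integral that manifestly vanishes by an odd-symmetry argument under $z \mapsto -z$ or $z \mapsto \bar z$ on the torus. A secondary obstacle is pinning down the exact bound $\sqrt{\sqrt{8}-1}\, e_1$: I anticipate this arises from requiring that the auxiliary zeros of $g$ (the solutions of a quadratic or quartic in $\wp$ whose discriminant involves $\sqrt{8}$) stay off the real locus / stay regular, so I would extract it by analyzing exactly when those zeros first hit a pole of $dh$ or create a branch point, giving the sharp threshold of the inequality.
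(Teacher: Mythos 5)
Your outline follows the paper's general route (Enneper--Weierstrass data on the thrice-punctured square torus, a period problem, total curvature via $\deg g$, symmetries via Schwarz reflection), but it has a genuine gap at exactly the two points you flag as delicate, and your plan for both would fail. The period problem is \emph{not} closed by showing that a residual real period ``vanishes identically in $x$'' through an odd-symmetry argument --- it does not vanish identically. The paper's data is $g = c\,\wp'/\bigl(\wp(\wp+x)(\wp-y)\bigr)$ and $\eta = \wp(\wp+x)^2(\wp-y)^2(\wp-e_1)^{-2}(\wp+e_1)^{-2}\,dz$, with a second parameter $y$ and a free constant $c>0$, and the proof integrates $\phi_1,\phi_2,\phi_3$ explicitly in terms of $\zeta$ and $\wp'$ (partial fractions plus the addition-theorem identities of Proposition~\ref{prop2}), then evaluates the torus periods via Proposition~\ref{prop6}. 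Symmetry considerations only give $\int_{\alpha_i}\phi_3=0$ and $\Re\int_{\alpha_2}\phi_1=\Re\int_{\alpha_1}\phi_2=0$; the two surviving real equations read $F_1+2c^2/e_1^2=0=F_2+2c^2/e_1^2$ with $F_1,F_2$ explicit rational expressions in $x,y,e_1,\pi$. They are solved not by symmetry but by (i) the compatibility condition $F_1=F_2$, equivalent to $(y-x)(xy+5e_1^2)=0$, which is precisely what forces the relation $y=x$ defining the family of Theorem~\ref{TeoA}, and (ii) the choice $c^2=-e_1^2(F_1+F_2)/4$. In your formulation the Gauss map carries no adjustable constant playing the role of $c$, so there is no mechanism left to make the nontrivial periods vanish.

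Consequently you also misattribute the bound $|x|<\sqrt{\sqrt{8}-1}\,e_1$. It does not come from regularity, branch points, or zeros of $g$ colliding with poles of $dh$ (the divisor matching required by Theorem~\ref{W} holds for all $x$ by construction of the data). It is exactly the solvability condition of the period problem: for $y=x$ one gets $c(x)=\frac{\sqrt{2\pi}}{4e_1}\sqrt{8e_1^4-(x^2+e_1^2)^2}$, which is a positive real number precisely when $(x^2+e_1^2)^2<8e_1^4$, i.e.\ $|x|<\sqrt{\sqrt{8}-1}\,e_1$; at the endpoint no admissible $c>0$ closes the periods and (as the paper remarks) one instead obtains doubly periodic surfaces. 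Your step four (degree count $\deg g=5$ giving $-20\pi$, dropping when $|x|=e_1$, consistency with Jorge--Meeks) is correct and matches the paper, but without the two-parameter setup, the explicit elliptic-function evaluation of the periods, and the free constant $c$, the core of the proof --- and the very origin of the parameter range in the statement --- is missing.
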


For any $x$ in the above range, we denote by $S^1_x$ the corresponding element in the family. In  Figure~\ref{Fig-sup} we picture four of their elements, highlighting the fundamental piece and symmetries. This, as well as all the pictures presented here, were rendered with Maple software, a trademark of Waterloo Maple Inc.\\

	\begin{figure}[h!]\label{Fig-sup}
\subfigure[\label{Figure_a}]{
\includegraphics[totalheight=5.5cm]{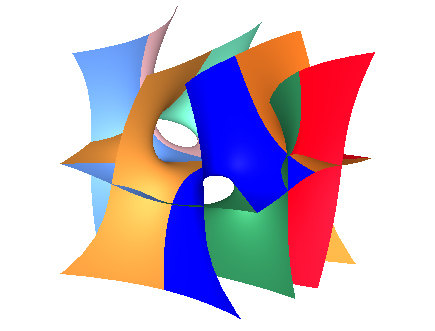}}
\subfigure[\label{Figure_b}]{
\includegraphics[totalheight=5.5cm]{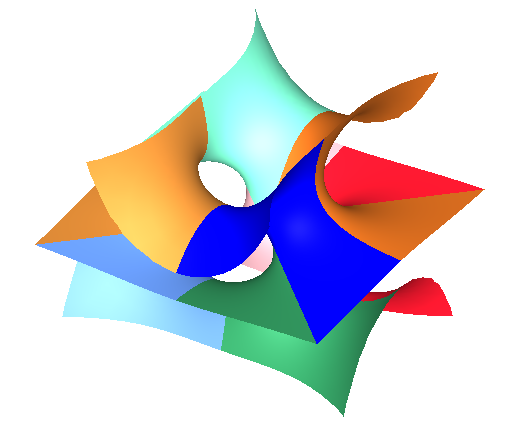}}
\subfigure[\label{Figure_c}]{
\includegraphics[totalheight=5.5cm]{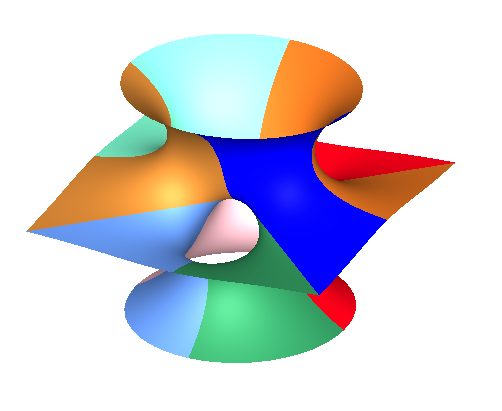}}
\subfigure[\label{Figure_d}]{
\includegraphics[totalheight=5.5cm]{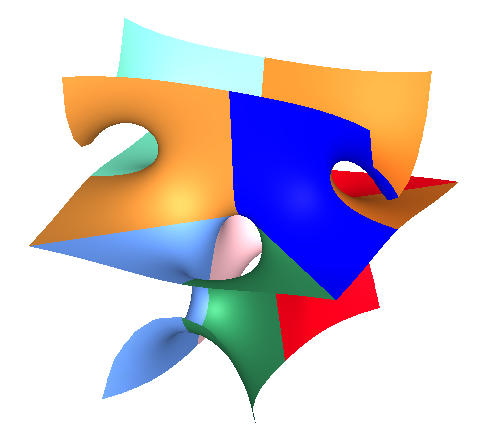}}
\caption{Computer graphics of the genus one minimal surfaces $S_x^1$ for: (a) $x=0$,  (b) $x=-\sfrac{1}{2}+e_1$,  (c) $x=e_1$ (Costa surface) and (d) $x=\sfrac{1}{2}+e_1.$}
\end{figure}

We point out that in the proof of Theorem~\ref{teo3} we use the Weierstrass $\wp$-function following the classical approach given by C.~Costa, C.C.~Chen and F. ~Gackstatter.  Our construction mostly resorts to undergraduate complex analysis \cite{Ahlfors.1966}, whence accessible for readers from many research areas. We also remark that would be possible to prove the Theorem~\ref{teo3} by using the powerful reverse construction method given by H.~Karcher in \cite{Karcher.1989}.  \\

Section~\ref{Sec4} is dedicated to describe a larger family of immersed and complete minimal surfaces $\Sigma_{k,t}$ that generalizes the family $S_x^1$ above, by increasing the genus of the elements from $1$ to any integer $k \geq 1$. Such surfaces are deformations of the Costa-Hoffman-Meeks surfaces of genus $k$, where the two catenoidal ends are replaced by Enneper type ends, while preserving the original symmetries. This deformation occurs by bending $(k + 1)$ brims of each catenoidal end towards the planar end,
intersecting it transversely. In the same catenoid end, the remaining $(k+1)$ brims are bended outwards the planar end. All that brims pass through the symmetry planes of the surface. The deformation of the other
catenoidal end respects symmetries imposed by the straight lines and planar geodesics in the surface $\Sigma_{k,t}$.  As we shall see, in this notation, we have
\begin{itemize}
\item $\Sigma_{1,1}$ is the Costa surface;
\item  $\Sigma_{1,t}$ are the genus one minimal surfaces $S_x^1$ of Theorem~\ref{TeoA};
\item the sub-family $\Sigma_{k,t}$ for $|t|=1$,  reduces to the family of Costa-Hoffman-Meeks  embedded minimal surfaces $M_k$.  
\end{itemize}

More precisely, the main goal of this paper is to prove the following
\begin{thmy}
\textit{There exists a one-parameter family  $\Sigma_{k,t}$ of complete minimal surfaces in $\mathbb{R}^3$ of genus $k$,  with finite total curvature and three ends,  containing as sub-family $\Sigma_{k,\pm 1}$ the Costa-Hoffman-Meeks family  of embedded  surfaces $M_k$. Moreover,  if $|t| \neq 1$ and $|t| < \sqrt{2\sqrt{k+1}-1},$ then the immersed minimal surfaces $\Sigma_{k,t}$ have total curvature $ -4\pi(3k+2)$,  two Enneper-type ends and one middle planar end.}
\end{thmy}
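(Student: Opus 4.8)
The plan is to construct the surfaces $\Sigma_{k,x}$ explicitly via the Enneper-Weierstrass Representation Theorem, generalizing the genus-one construction of Theorem~\ref{teo3} to arbitrary genus $k$. First I would identify the correct Riemann surface: the higher-genus Costa-Hoffman-Meeks surfaces arise from a branched cover of the torus, so here I expect the underlying Riemann surface $\overline{\Sigma}_k$ to be the compactification of the algebraic curve $\{(z,w) : w^{k+1} = z^{k}(z^2-1)\}$ (or a close variant), which carries a $\mathbb{Z}_{k+1}$-symmetry realizing the dihedral action with $4(k+1)$ elements. On this curve I would write down the Weierstrass data $(g,\eta)$ as a one-parameter deformation depending on $x$, with the Gauss map $g$ built from $z$ and $w$ and the height differential $\eta$ chosen so that the two outer ends become Enneper-type (winding order three, giving each a curvature contribution of $-12\pi$) rather than catenoidal, while the middle end stays planar. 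The parameter $x$ enters as a coefficient that, at $|x|=1$, degenerates the Enneper ends back to catenoidal ends and recovers $M_k$.

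Next I would verify that this data satisfies the requirements of the representation theorem. The regularity/completeness of the immersion and the metric $ds = (1+|g|^2)|\eta|$ being nondegenerate and complete follow from checking the orders of zeros and poles of $g$ and $\eta$ at the branch points and ends; this is where the bound $|x| < \sqrt{2\sqrt{k+1}-1}$ should come in, as it is precisely the condition guaranteeing the metric has no new zeros (i.e. the immersion stays regular) for the deformed data. The symmetry group claim reduces to exhibiting the $\mathbb{Z}_{k+1}$ rotation and the reflections as automorphisms of $\overline{\Sigma}_k$ under which $(g,\eta)$ transforms compatibly with the rigid motions $K$ and $L_\theta$ of \eqref{matrix}, so that the immersion descends to an equivariant map.

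The total curvature computation follows from the Gauss-Bonnet/Jorge-Meeks formula $\int_{\Sigma} K \, dA = -2\pi\bigl(2 - 2k - \sum_{j}(d_j+1)\bigr)$, where $d_j$ is the winding order (multiplicity) of the $j$-th end: with two Enneper-type ends of winding order three and one planar end, summing the contributions should yield exactly $-4\pi(3k+2)$. I would compute this directly from the degree of the Gauss map on $\overline{\Sigma}_k$, namely $\deg g = k + 2$ for the deformed family against $\deg g = k+2$ in the embedded case but with the changed end behavior shifting the total curvature from $-4\pi(k+2)$ to $-4\pi(3k+2)$.

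The main obstacle I expect is the period problem: I must verify that the real periods of the Weierstrass forms $\phi_1,\phi_2,\phi_3$ vanish along every closed cycle of $\overline{\Sigma}_k$ and around each end, so that the immersion is well-defined (single-valued) on the whole surface. In the Costa-Hoffman-Meeks case this is handled by the large symmetry group, which forces most periods to vanish and reduces the problem to a single scalar condition; the delicate part here is to show that introducing the deformation parameter $x$ preserves enough symmetry that the period conditions continue to hold \emph{identically} in $x$ (not just at $|x|=1$), rather than imposing a discrete constraint. Concretely I would use the reflectional symmetries to reduce all period integrals to integrals over symmetric cycles that are forced to be purely real or purely imaginary, so that the Enneper-type deformation — which changes the end data but respects the same symmetries — leaves the period-closing conditions intact for the entire range of $x$ allowed by the completeness bound.
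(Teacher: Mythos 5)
Your overall skeleton matches the paper: same algebraic curve $w^{k+1}=z^k(z^2-1)$, Weierstrass data deforming the Costa-Hoffman-Meeks data so that $|x|=1$ recovers $M_k$, symmetry via automorphisms of the curve, and the period problem as the central difficulty. However, two of your key quantitative steps are wrong. First, the bound $|x|<\sqrt{2\sqrt{k+1}-1}$ does \emph{not} come from regularity or completeness of the metric: the zero/pole matching between $g$ and $\eta$ (the paper's Table~\ref{table2}) holds for every real $x\neq 0,\pm1$ regardless of its size. The bound comes from the period problem, and your plan to make the periods vanish ``identically in $x$'' by symmetry cannot work for the $2k$ homology cycles of $\overline{M}_k$. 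Symmetry does dispose of the periods around the three punctures (the paper uses the two non-parallel symmetry planes to force those period vectors to vanish), but on the homology basis the period conditions reduce to the scalar equations \eqref{una}, $\int_{\tilde\gamma}\eta=c^2\,\overline{\int_{\tilde\gamma}\eta_2}$, which must be solved by choosing the free constant $c$ in the Gauss map. Computing these integrals (after adding exact one-forms to remove divergent pieces, following Hoffman--Meeks) yields $c^2=\frac{4k+3-2x^2-x^4}{4k}\cdot\frac{A}{B}$ with $A,B<0$ as in \eqref{const_c}, and the existence of a positive solution, i.e.\ $4k+3-2x^2-x^4>0$, is \emph{exactly} the inequality $|x|<\sqrt{2\sqrt{k+1}-1}$. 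Omitting this computation omits the step where the theorem's hypothesis is actually used.

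Second, your total-curvature argument is internally inconsistent: you assert $\deg g=k+2$ for the deformed family while claiming $C_T=-4\pi(3k+2)$, but the representation theorem gives $C_T=-4\pi\deg(\overline g)$, so both cannot hold. With the correct data $g=\frac{z^2-1}{z^2-x^2}\cdot\frac{c}{w}$, the factor $(z^2-1)$ produces zeros of order $k$ at each of $\mathfrak{p}_{\pm1}$ in addition to the zero of order $k+2$ at $\mathfrak{p}_\infty$, so $\deg g=3k+2$: the deformation genuinely raises the degree of the Gauss map from $k+2$ to $3k+2$, and the total curvature follows immediately. Relatedly, by the Jorge--Meeks formula \eqref{FJMG} the Enneper ends have winding order $2k+1$, not $3$; your value is correct only in the genus-one case you were generalizing from.
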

We emphasize that, for each $k$ and $t$, $\Sigma_{k,t}$ and Costa-Hoffman-Meeks surface of genus $k$ share the same group of symmetries. Hence each one of them may be decomposed in $4(k+1)$ congruent fundamental pieces. Figure~\ref{Fig-g-2} depicts four elements of the family $\Sigma_{2,t}$, built from the action of the twelve elements in the symmetry group of the surface on one of its fundamental pieces. In the pictures of the entire surface in Figure~\ref{Fig-g-2} the corresponding $12$ pieces are detached using different colors and were produced using the software Maple via the Weierstrass data  $(g, \eta)$. 
\begin{figure}[h!]
\subfigure[\label{Figure_a2}]{
\includegraphics[height=4.1cm]{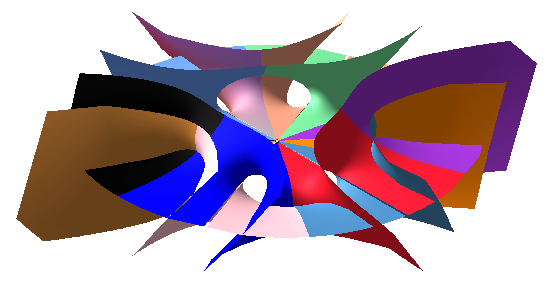}}
\subfigure[\label{Figure_b2}]{
\includegraphics[height=4.5cm]{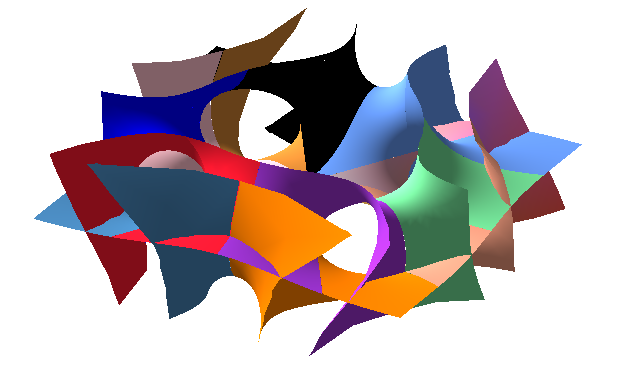}}
\subfigure[\label{Figure_c2}]{
\includegraphics[height=4cm]{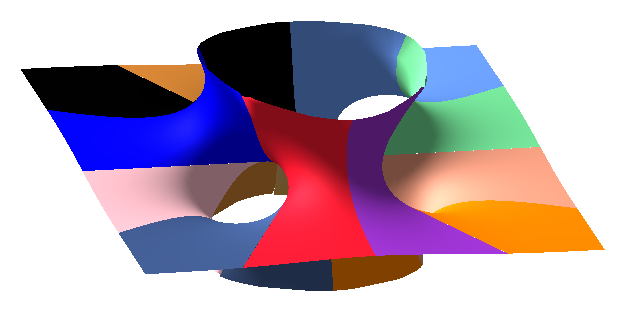}}
\subfigure[\label{Figure_d2}]{
\includegraphics[height=4.1cm]{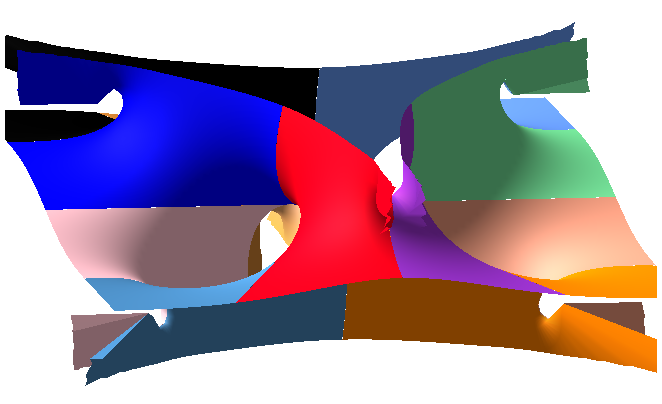}}
\caption[Computer graphics of the genus two minimal surfaces $\Sigma_{2,t}$ obtained for: (a) $t=0$, (b) $t=0.8$, (c) $t=1$ (Costa-Hoffman-Meeks surface) and (d) $t=1.2.$]{Computer graphics\footnotemark \,of the genus two minimal surfaces $\Sigma_{2,t}$ obtained for: (a) $t=0$, (b) $t=0.8$, (c) $t=1$ (Costa-Hoffman-Meeks surface) and (d) $t=1.2.$}
\label{Fig-g-2}
\end{figure}
\footnotetext{According to the expression of $c(k,t)$ in \eqref{const_c},  the estimations of the parameters that solve the period problem are given by  $c(2,0)=1.158615284$,  $c(2,0.8)=1.065926062$, $c(2,1)=0.9880704345$ and $c(2,1.2)=0.8589964085$,  respectively.}
\\

The results presented in Section~\ref{sec3}, which focuses on surfaces of genus $1$, are of intrinsic interest, even though they are generalized to higher
genus examples in Section~\ref{Sec4}. In the genus $1$ case, the techniques used in the construction of such surfaces and the study of their symmetries highlight the properties of Weierstrass elliptic function $\wp$, recorded in Section~\ref{sec2}. Moreover, it is important to notice that both $1$-parameter families of minimal surfaces obtained in Theorem~\ref{teo3} provide noncongruent examples, immersed in $\r^3$, with the same symmetries and conformal structure (see Figure~\ref{Fig4ab}), as we have in \cite{Batista.2004}.  This rises the question whether uniqueness could be a consequence of the following setup: given two complete embedded minimal surfaces of finite total curvature in a flat space,  if they have the same symmetries and conformal structure, do these two surfaces coincide?\\

Eventually,  this question can motivate some readers to try to construct triply periodic minimal surfaces based on the examples in \cite{Batista.2004} by replacing the catenoidal ends with planar curves of symmetry. Six possible constructions can be carried out, namely from birdcage-catenoids of genera two, three and five. Since the birdcage-catenoids don’t imply uniqueness, then whether such examples exist the aforementioned question admits a negative answer. 

\section{Basic theory and Enneper-Weierstrass representation}\label{sec2}
This section is devoted to state some important results about elliptic functions and minimal surfaces,  widely used throughout this work. The reader can find the details in \cite {Chand.1985},  \cite{Costa.1984},  \cite{Fang.1990},  \cite{Osserman.2013} and \cite{vilhena.2021}.   We start reviewing one of the principal tools used to construct minimal surfaces in the Euclidean $3$-dimensional space,  in the formulation due to Osserman \cite{Osserman.2013}:

\begin{theorem}[Enneper-Weierstrass representation]\label{W} 
Let $\overline{M}$ be a compact Riemann surface and $M =\overline{M}-\{p_1, \cdots, p_n\}.$ Suppose that $\mathfrak{g}: \overline{M} \rightarrow \mathbb{C}\cup \{\infty\} $ is a meromorphic function and $\eta$ is a meromorphic $1$-form such that whenever $g=\mathfrak{g}|_M$ has a pole of order $k$, then $\eta$ has a zero of order $2k$ and $\eta$ has no other zeros on $M$. Let
\begin{equation}\label{WR}
\phi_1=\left(1-g^2 \right)\eta, \quad  \phi_2=i\left(1+g^2 \right)\eta, \quad \phi_3=2g\, \eta. 
\end{equation}
If for any closed curve $\alpha$ in $M$,
\begin{equation}\label{PP1}
\Re\int_{\alpha} \phi_j=0, \qquad  j =1,2,3,
\end{equation}
and every divergent curve $\gamma$ in $M$ has infinite length, i.e.,
\begin{equation}\label{PS0}
\int_{\gamma} (1+|g|^2)\,|\eta|=\infty,
\end{equation}
then the surface $S$, defined by $X: M \rightarrow \mathbb{R}^3$, is a complete regular minimal surface, where
\begin{equation}\label{PS}
X(z)=\Re\left(\int_{z_0}^z \phi_1, \int_{z_0}^z \phi_2, \int_{z_0}^z \phi_3\right).
\end{equation}
Here, $z_0$ is any fixed point of $M$. Moreover, 
the total curvature of $S$ is
\begin{equation}\label{CT}
C_T(S) = -4\pi\, \mathrm{deg}(\mathfrak{g}).
\end{equation}
\end{theorem}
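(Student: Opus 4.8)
The plan is to verify, one by one, the assertions of the statement: that the map $X$ is well defined and single-valued, that it is a conformal minimal immersion, that the induced metric is complete, and finally the total curvature formula \eqref{CT}. First I would check that $\phi_1,\phi_2,\phi_3$ are holomorphic $1$-forms on $M$. Away from the poles of $g$ this is clear, since there $g$ and $\eta$ are holomorphic. At a pole of $g$ of order $k$, the hypothesis forces $\eta$ to have a zero of order exactly $2k$; writing $g\sim z^{-k}$ and $\eta\sim z^{2k}\,dz$ in a local coordinate, the products $g^2\eta$, $g\eta$ and $\eta$ are all holomorphic, hence so are the $\phi_j$. Each coordinate $X_j(z)=\Re\int_{z_0}^{z}\phi_j$ is then locally well defined, and the period hypothesis \eqref{PP1} guarantees that its value is independent of the path from $z_0$ to $z$; therefore $X\colon M\to\mathbb{R}^3$ is single-valued.

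Next I would record the two algebraic identities on which the construction rests. A direct computation gives the isotropy relation
\begin{equation*}
\phi_1^2+\phi_2^2+\phi_3^2=\bigl[(1-g^2)^2-(1+g^2)^2+4g^2\bigr]\eta^2=0,
\end{equation*}
together with
\begin{equation*}
|\phi_1|^2+|\phi_2|^2+|\phi_3|^2=2\,(1+|g|^2)^2\,|\eta|^2,
\end{equation*}
so that $X$ is conformal with induced metric $ds^2=(1+|g|^2)^2|\eta|^2$. Since each $X_j$ is the real part of a holomorphic, hence harmonic, function, the coordinates are harmonic; a conformal immersion with harmonic coordinate functions is minimal, which yields minimality once regularity is known. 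For regularity I must check that $ds^2$ is finite and nonzero at every point: where $\eta\neq 0$ and $g$ is finite this is immediate, while at a pole of $g$ of order $k$ the estimate $(1+|g|^2)^2|\eta|^2\sim|z|^{-4k}\cdot|z|^{4k}\,|dz|^2=|dz|^2$ shows the metric stays finite and positive, and this is exactly where the matching ``zero of order $2k$'' is used. Completeness is then immediate: the length of a curve $\gamma$ equals $\int_\gamma(1+|g|^2)\,|\eta|$, so \eqref{PS0} says precisely that every divergent curve has infinite length.

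Finally I would prove the total curvature formula, which I expect to be the main point. Writing $\eta=f\,dz$ locally, so that $ds^2=(1+|g|^2)^2|f|^2\,|dz|^2$, the formula $K=-\lambda^{-2}\Delta\log\lambda$ for the conformal factor $\lambda=(1+|g|^2)|f|$ (using that $\log|f|$ is harmonic) gives
\begin{equation*}
K=-\frac{4\,|g'|^2}{(1+|g|^2)^4\,|f|^2},\qquad\text{hence}\qquad K\,dA=-\frac{4\,|g'|^2}{(1+|g|^2)^2}\,dx\,dy.
\end{equation*}
The right-hand side is minus the pullback by $g$ of the area form of the unit sphere (of total area $4\pi$) under stereographic projection. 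Integrating over $M$ and using that $g$ extends to the meromorphic map $\overline{g}\colon\overline{M}\to\mathbb{C}\cup\{\infty\}\cong S^2$, the integral equals $4\pi$ times the covering degree of $\overline{g}$, whence $C_T(S)=-4\pi\,\deg(\overline{g})$. The main obstacle is precisely this identification: one must know that $\overline{g}$ is globally defined on the compactification $\overline{M}$ and that the finite-total-curvature integral of the pulled-back spherical area form assembles into $4\pi\,\deg(\overline{g})$; by contrast, conformality, minimality and completeness reduce to the algebraic identities above together with the definitions.
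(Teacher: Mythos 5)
The paper does not actually prove this theorem: it is quoted as background, in Osserman's formulation, with the citation \cite{Osserman.2013}, and is then only applied in Sections \ref{sec3} and \ref{Sec4}. So there is no internal proof to compare against; judged on its own terms, your proposal is the standard Osserman argument, correctly reproduced. The isotropy identity $\phi_1^2+\phi_2^2+\phi_3^2=0$ together with $\sum_j|\phi_j|^2=2(1+|g|^2)^2|\eta|^2$ gives conformality and the metric $ds^2=(1+|g|^2)^2|\eta|^2$ (with the paper's normalization of \eqref{WR}, i.e.\ no factor $\tfrac12$, your conformal factor is the right one, consistent with \eqref{PS0}); harmonicity of the coordinates plus conformality gives minimality; path-independence of the real parts under \eqref{PP1} makes $X$ single valued; the cancellation $|z|^{-4k}\cdot|z|^{4k}$ at a pole of $g$ of order $k$, together with the hypothesis that $\eta$ has no other zeros, gives regularity; completeness is a restatement of \eqref{PS0}; and the computation $K\,dA=-\frac{4|g'|^2}{(1+|g|^2)^2}\,dx\,dy$ identifies $-K\,dA$ with the pullback under $g$ of the spherical area form, yielding \eqref{CT}.

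Two points deserve more care than you give them. First, although the statement says ``$\eta$ is a meromorphic $1$-form'', your argument tacitly (and necessarily) reads the hypotheses as saying that $\eta$ is holomorphic on $M$, its poles occurring only at the punctures $p_1,\dots,p_n$: if $\eta$ had a pole on $M$ at a point where $g$ is finite, the forms $\phi_j$ would fail to be holomorphic there and $X$ would not even be well defined, so this reading should be made explicit. Second, in the curvature step, $\log|f|$ is harmonic only away from the zeros of $f$, i.e.\ away from the poles of $g$; to integrate $K\,dA$ over all of $M$ you should note that the $2$-form $\frac{4|g'|^2}{(1+|g|^2)^2}\,dx\,dy$ is exactly the pullback of the spherical (Fubini--Study) area form and hence extends smoothly across the poles of $g$, and that deleting the finitely many points $p_i$ does not change the integral, so that $\int_M(-K)\,dA=\int_{\overline{M}}\overline{g}^{\,*}\big(\text{area form of }S^2\big)=4\pi\,\mathrm{deg}(\overline{g})$ for the nonconstant branched covering $\overline{g}:\overline{M}\to \mathbb{C}\cup\{\infty\}$. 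With these two clarifications your proof is complete, and it is precisely the classical proof that the paper's citation points to.
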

\begin{definition}
Given the lattice  
$\mathcal{L} = [1,i]$ in the complex plane $\mathbb{C}$,  the Weierstrass $\wp$ function (relative to $\mathcal{L} $) is the doubly periodic meromorphic function defined by 
\begin{equation}\label{PW}
\wp(z)=\frac{1}{z^2}+\sum_{\substack{\omega \in \mathcal{L}, \,\omega \neq 0}}\bigg\{\frac{1}{(z-\omega)^2}-\frac{1}{\omega^2} \bigg\}.
\end{equation}
The Weierstrass $\zeta$ function is defined by
\begin{equation}\label{ZW}
  \zeta(z)=\frac{1}{z}+\sum_{\substack{\omega \in \mathcal{L}, \,\omega \neq 0}}
  \left(\frac{1}{z-\omega}+\frac{1}{\omega}+\frac{z}{\omega^2}\right) 
\end{equation}
and 
\begin{equation*}\label{ZP}
\zeta'(z)=-\wp(z).
\end{equation*}
\end{definition}
Let $F=\{ z \in \mathbb{C} \ | \ 0 \leqslant \Re(z)  < 1, \   0 \leqslant {\mathcal Im}(z) < 1\}$  be a fundamental domain  of $\wp$. Then, in $F$ the elliptic function $\wp(z)$ satisfies  the differential equation
\begin{equation}\label{EDPW}
	(\wp')^2= 4\wp^3 - g_2 \wp,  
\end{equation}
where  $g_2=60\!\!\!\!\!\!\sum\limits_{\omega \in \mathcal{L}, \,\omega \neq 0}{\omega^{-4}}.$
Setting $e_1=\wp(1/2)$ we have $\displaystyle{e_1=\frac{\sqrt{g_2}}{2}  \approx 6.875185815}$.  Also,  by further differentiation of \eqref{EDPW}, we obtain
\begin{equation}\label{EP2}
\wp^2=\frac{\wp''}{6}+\frac{e_1^2}{3}.
\end{equation}
From the Addition Theorem (see, for example,  \cite{Chand.1985}) it's easy to prove the following
\begin{proposition}\label{prop2}
Let $\mathcal{L} = [1,i]$ and $z \in F$. Then, we have:
\begin{equation}\label{pe1}
\frac{2e_1^2}{\wp-e_1}=\wp(z-1/2)-e_1,
\qquad
\frac{2e_1^2}{\wp+e_1}=\wp(z-i/2)+e_1.
\end{equation}
Therefore, from \eqref{EP2},  we find that
\begin{equation}\label{pe1bis}
\frac{24 e_1^4}{(\wp-e_1)^2}=\wp''(z-1/2)-12e_1\wp(z-1/2)+8e_1^2,
\end{equation}
\begin{equation}\label{pe3bis}
\frac{24 e_1^4}{(\wp+e_1)^2}=\wp''(z-i/2)+12e_1\wp(z-i/2)+8e_1^2.
\end{equation}
\end{proposition}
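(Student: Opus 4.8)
The plan is to reduce everything to the standard ``half-period'' addition formula for the Weierstrass function. Writing the three roots of the cubic in \eqref{EDPW} as $e_1,\,0,\,-e_1$, these are exactly the values of $\wp$ at the three half-periods $1/2$, $(1+i)/2$ and $i/2$, since $\wp'$ vanishes there. Indeed $\wp(1/2)=e_1$ by definition, and for the square lattice the homogeneity relation $\wp(iz)=-\wp(z)$ gives $\wp(i/2)=-e_1$; since the three half-period values sum to zero, $\wp((1+i)/2)=0$. The key identity I would establish is that, for a half-period $\omega$ with $\wp(\omega)=e$ and remaining roots $e',e''$,
\begin{equation}\label{halfperiod}
\wp(z+\omega)-e=\frac{(e-e')(e-e'')}{\wp(z)-e}.
\end{equation}

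To prove \eqref{halfperiod} I would start from the Addition Theorem in the form
\begin{equation*}
\wp(u+v)=-\wp(u)-\wp(v)+\tfrac14\Big(\tfrac{\wp'(u)-\wp'(v)}{\wp(u)-\wp(v)}\Big)^2,
\end{equation*}
set $v=\omega$, and use $\wp'(\omega)=0$ together with the factorization $(\wp')^2=4(\wp-e_1)\,\wp\,(\wp+e_1)$ coming from \eqref{EDPW}. This turns the last term into $(\wp(u)-e')(\wp(u)-e'')/(\wp(u)-e)$, and after combining the three terms over the common denominator $\wp(u)-e$ the numerator collapses to a constant: using $e+e'+e''=0$ one checks that it equals $(e-e')(e-e'')=2e^2+e'e''$. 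This is the only genuinely computational step, and it is the place to be careful with signs.

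Applying \eqref{halfperiod} with $\omega=1/2$, $e=e_1$, $\{e',e''\}=\{0,-e_1\}$ gives numerator $(e_1)(2e_1)=2e_1^2$, and with $\omega=i/2$, $e=-e_1$, $\{e',e''\}=\{0,e_1\}$ gives numerator $(-e_1)(-2e_1)=2e_1^2$ as well. Finally, since $1,i\in\mathcal{L}$ are full periods, $\wp(z+1/2)=\wp(z-1/2)$ and $\wp(z+i/2)=\wp(z-i/2)$; substituting yields exactly the two identities in \eqref{pe1}.

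For the second pair of identities I would simply square each equation in \eqref{pe1} and multiply by $6$, so that the left-hand sides become $24e_1^4/(\wp\mp e_1)^2$. Expanding, say, $6(\wp(z-1/2)-e_1)^2=6\wp(z-1/2)^2-12e_1\wp(z-1/2)+6e_1^2$ and inserting \eqref{EP2} in the form $6\wp^2=\wp''+2e_1^2$ evaluated at the shifted argument $z-1/2$ converts $6\wp(z-1/2)^2$ into $\wp''(z-1/2)+2e_1^2$, giving \eqref{pe1bis}; the computation for \eqref{pe3bis} is identical with $z-i/2$ and the sign of the linear term reversed. No new obstacle arises here --- it is a direct substitution once \eqref{pe1} is in hand.
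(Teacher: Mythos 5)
Your proof is correct and takes essentially the same route as the paper, which proves \eqref{pe1} by invoking the Addition Theorem (specialized at the half-periods, where the root values $e_1$, $0$, $-e_1$ and the factorization \eqref{EDPW} give the constant numerator $2e_1^2$) and then obtains \eqref{pe1bis}--\eqref{pe3bis} from \eqref{EP2}. You have merely filled in the details the paper leaves to the reader, and all of your sign computations and the periodicity step $\wp(z+1/2)=\wp(z-1/2)$, $\wp(z+i/2)=\wp(z-i/2)$ check out.
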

By using the Legendre's relation we have 
\begin{proposition}[\cite{Costa.1984}]\label{prop6}
Let $\alpha_i: [0,1] \rightarrow \mathbb{C}$, $i=1,2$, be the paths
\begin{equation*}
\alpha_1(t) =\frac{i}{3}+t, \qquad \alpha_2(t)=\frac{1}{3}+it
\end{equation*}
of the homology basis of the torus $\displaystyle{T^2=\mathbb{C}/\mathcal{L}}.$ Then,
\begin{equation}\label{intA}
\int_{\alpha_1} \wp(z) \, dz = -\pi, \qquad \int_{\alpha_2} \wp(z) \, dz = i\pi.
\end{equation}
\end{proposition}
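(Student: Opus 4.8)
The plan is to integrate the identity $\wp=-\zeta'$, which reduces each period of $\wp\,dz$ to a difference of values of the Weierstrass $\zeta$ function. Since the Laurent expansion of $\wp$ at a lattice point contains no $z^{-1}$ term, the differential $\wp\,dz$ has vanishing residue everywhere, so the integrals in \eqref{intA} depend only on the homology classes of $\alpha_1,\alpha_2$; in particular the offset $1/3$ merely keeps the paths away from the poles and plays no role. With $\alpha_1(0)=i/3$, $\alpha_1(1)=1+i/3$ and $\alpha_2(0)=1/3$, $\alpha_2(1)=1/3+i$, the fundamental theorem of calculus gives
\begin{equation*}
\int_{\alpha_1}\wp(z)\,dz=-\big[\zeta(1+\tfrac{i}{3})-\zeta(\tfrac{i}{3})\big]=-\eta_1,\qquad \int_{\alpha_2}\wp(z)\,dz=-\big[\zeta(\tfrac{1}{3}+i)-\zeta(\tfrac{1}{3})\big]=-\eta_2,
\end{equation*}
where $\eta_1:=\zeta(z+1)-\zeta(z)$ and $\eta_2:=\zeta(z+i)-\zeta(z)$ are the Weierstrass quasi-periods attached to the generators $1$ and $i$ of $\mathcal{L}$ (these are independent of $z$ by the quasi-periodicity of $\zeta$). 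Thus it suffices to prove $\eta_1=\pi$ and $\eta_2=-i\pi$.

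The two ingredients are Legendre's relation and the special symmetry of the square lattice. Legendre's relation, which one obtains by integrating $\zeta$ counterclockwise around a fundamental parallelogram and using that the single enclosed pole contributes residue $1$, reads $\eta_1\,\omega_2-\eta_2\,\omega_1=2\pi i$ with $(\omega_1,\omega_2)=(1,i)$; that is,
\begin{equation*}
i\,\eta_1-\eta_2=2\pi i.
\end{equation*}
This is a single equation in the two unknowns $\eta_1,\eta_2$, so a second relation is needed, and this is where the hypothesis $\mathcal{L}=[1,i]$ enters decisively.

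Since $i\mathcal{L}=\mathcal{L}$, the homogeneity of the Weierstrass zeta function, $\zeta(\lambda z;\lambda\mathcal{L})=\lambda^{-1}\zeta(z;\mathcal{L})$, specializes at $\lambda=i$ to $\zeta(iz)=-i\,\zeta(z)$. Putting $z=iu$ in the definition of $\eta_2$ and using this identity together with $\zeta(u+1)=\zeta(u)+\eta_1$ yields
\begin{equation*}
\eta_2=\zeta(i(u+1))-\zeta(iu)=-i\big[\zeta(u)+\eta_1\big]+i\,\zeta(u)=-i\,\eta_1.
\end{equation*}
Substituting $\eta_2=-i\,\eta_1$ into Legendre's relation gives $2i\,\eta_1=2\pi i$, hence $\eta_1=\pi$ and $\eta_2=-i\pi$, and therefore $\int_{\alpha_1}\wp\,dz=-\pi$ and $\int_{\alpha_2}\wp\,dz=i\pi$, as claimed. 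I expect the only delicate points to be the bookkeeping of signs and orientation in Legendre's relation and the correct sign in the scaling law for $\zeta$; the conceptual heart of the argument is the observation that Legendre's relation alone underdetermines the quasi-periods, and that the rotational symmetry $i\mathcal{L}=\mathcal{L}$ of the square lattice supplies exactly the missing equation.
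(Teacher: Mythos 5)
Your proof is correct: the reduction of each period to quasi-periods via $\wp=-\zeta'$, Legendre's relation $i\,\eta_1-\eta_2=2\pi i$ for $(\omega_1,\omega_2)=(1,i)$, and the square-lattice symmetry $\zeta(iz)=-i\,\zeta(z)$ coming from $i\mathcal{L}=\mathcal{L}$ all check out, signs included. The paper states this proposition without proof, citing Costa and remarking only that it follows from Legendre's relation, and your argument is exactly that standard route (Legendre's relation plus the lattice symmetry supplying the second equation for the two quasi-periods), so it coincides with the approach the paper indicates.
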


The symmetries of the minimal surfaces of genus $1$ that we construct in Section~\ref{sec3}  are  a consequence of the symmetries of the Weierstrass $\wp$ function in its fundamental domain $F$ (see Proposition~\ref{Prop3}) and of Proposition~\ref{Karcher.1989} below.

\begin{proposition}[\cite{Hoffman.1985}]\label{Prop3}
Let $\wp(z)$ be the Weierstrass $\wp$-function for the unit-square lattice $\mathcal{L}$ and $w_2=(1+i)/2$,  then 
\begin{enumerate}
\item $\wp(\rho(w_2+z))=-\wp(w_2+z), \quad \rho(w_2+z)=w_2+iz,$
\item $\wp(\beta(w_2+z))=\overline{\wp(w_2+z)}, \quad \beta(w_2+z)=w_2+\overline{z},$
\item $\wp(\rho\circ\beta(w_2+z))=-\overline{\wp(w_2+z)},$
\item $\wp(\rho^2\circ\beta(w_2+z))=\overline{\wp(w_2+z)},$
\item $\wp(\mu(w_2+z))=-\overline{\wp(w_2+z)}, \ \ \mu(w_2+z)=w_2-i\overline{z}. $
\end{enumerate}
\end{proposition}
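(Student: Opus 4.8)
The plan is to reduce all five identities to two fundamental symmetry properties of the Weierstrass function for the square lattice $\mathcal{L}=[1,i]$, combined with the standard evenness $\wp(-z)=\wp(z)$ and double periodicity $\wp(z+\omega)=\wp(z)$, $\omega\in\mathcal{L}$. The first property is the homogeneity relation
\[
\wp(iz)=-\wp(z),
\]
which follows directly from the defining series \eqref{PW}: since $i\mathcal{L}=\mathcal{L}$, reindexing the sum by $\omega\mapsto i\omega$ and using $(iz)^2=-z^2$ produces an overall sign change. The second is the reality relation
\[
\wp(\overline{z})=\overline{\wp(z)},
\]
valid because $\overline{\mathcal{L}}=\mathcal{L}$: conjugating \eqref{PW} term by term and reindexing $\omega\mapsto\overline{\omega}$ returns the same series with $z$ replaced by $\overline{z}$.

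Next I would record the two congruences of the center $w_2=(1+i)/2$ that let the maps $\rho,\beta,\mu$ interact cleanly with the lattice, namely $\overline{w_2}=w_2-i\equiv w_2\pmod{\mathcal{L}}$ and $-i\,w_2=\overline{w_2}\equiv w_2\pmod{\mathcal{L}}$. With these, item (1) follows by writing $w_2+iz=i(\overline{w_2}+z)$, applying homogeneity, and then periodicity:
\[
\wp(w_2+iz)=-\wp(\overline{w_2}+z)=-\wp(w_2+z).
\]
Item (2) follows from the reality relation together with the same periodicity:
\[
\overline{\wp(w_2+z)}=\wp(\overline{w_2}+\overline{z})=\wp(w_2+\overline{z}).
\]

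Finally, items (3)–(5) I would obtain by composing (1) and (2) rather than recomputing from the series. For (3), apply (1) with $\overline{z}$ in place of $z$ and then (2), giving $\wp(w_2+i\overline{z})=-\wp(w_2+\overline{z})=-\overline{\wp(w_2+z)}$. For (4), since $\rho^2\circ\beta(w_2+z)=w_2-\overline{z}$, a second application of (1) removes the sign and yields $\wp(w_2-\overline{z})=\overline{\wp(w_2+z)}$. For (5), write $w_2-i\overline{z}=w_2+i(-\overline{z})$, apply (1) and then (4) to reach $-\overline{\wp(w_2+z)}$. The only real care required—and the place where sign or lattice errors tend to creep in—is the bookkeeping of the congruences modulo $\mathcal{L}$ involving $w_2$ and keeping track of exactly which argument $\rho,\beta,\mu$ act on; once the two building-block identities and the congruences $\overline{w_2}\equiv w_2$, $-i\,w_2\equiv w_2$ are fixed, each of (1)–(5) is a short symbolic manipulation.
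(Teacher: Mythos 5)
Your proof is correct. Note, however, that the paper itself gives no proof of this proposition: it is stated as a known result imported from the cited reference (Hoffman--Meeks), so there is no in-paper argument to compare against. Your reduction of all five identities to the two lattice symmetries $\wp(iz)=-\wp(z)$ and $\wp(\overline{z})=\overline{\wp(z)}$ (both immediate from the series \eqref{PW} because $i\mathcal{L}=\mathcal{L}$ and $\overline{\mathcal{L}}=\mathcal{L}$), together with the congruences $\overline{w_2}\equiv w_2$ and $i\,\overline{w_2}=w_2$ modulo $\mathcal{L}$, is the standard argument and is carried out without error: the factorization $w_2+iz=i(\overline{w_2}+z)$ in item (1), the conjugation-plus-periodicity step in item (2), and the derivation of (3)--(5) by composing (1) and (2) all check out. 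In effect your write-up supplies the self-contained verification that the paper delegates to its reference.
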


\begin{remark}
We point out that $\rho$, $\beta$, $\rho^2 \circ \beta$, $\rho \circ \beta$ and $\mu$ are,  respectively, a rotation by $\pi/2$ about $w_2$, a reflection about the horizontal line, a reflection about the vertical line, a reflection about the positive diagonal and a reflection about the negative diagonal through $w_2$.
\end{remark}

\begin{proposition}[\cite{Karcher.1989},  \cite{Wohlgemuth.1991}]\label{Karcher.1989}
Let $\tau$ be a curve in $M$ such that $g \circ \tau$ is contained either in a meridian or in the equator of $\mathbb{S}^2$, and $(g\eta)(\tau')$ is real or purely imaginary. Then $\tau$ is a geodesic on $M$. Moreover,  if $\tau$ is a geodesic on $M$,  we have that
\begin{itemize}
\item [i)] $\tau$ is a planar curve of symmetry if, and only if $(dg \cdot\eta)(\tau')\in\r$; 
\item [ii)] $\tau$ is a straight line  if,  and only if  $(dg \cdot\eta)(\tau')\in i\,\r$.
\end{itemize}
\end{proposition}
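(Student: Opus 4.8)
The plan is to reduce the statement to a direct computation of the velocity of the image curve $c(t):=X(\zeta(t))$. Writing $\gamma:=g\circ\zeta$ and $\omega:=\eta(\zeta')$ for the values along $\zeta$, the representation \eqref{WR} gives
\begin{equation*}
\dot c=\Re\big((1-\gamma^2)\omega,\;i(1+\gamma^2)\omega,\;2\gamma\,\omega\big),
\end{equation*}
while the Gauss map image is $N=(2\gamma_1,\,2\gamma_2,\,|\gamma|^2-1)/(|\gamma|^2+1)$ with $\gamma=\gamma_1+i\gamma_2$. First I would translate the hypotheses on $g\circ\zeta$ into algebra: lying in a meridian means $\gamma=\lambda\,e^{i\theta_0}$ with $\lambda\in\r$ and $\theta_0$ fixed, and lying in the equator means $|\gamma|=1$. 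Since the change of data $g\mapsto e^{-i\theta_0}g$, $\eta\mapsto e^{i\theta_0}\eta$ rotates $S$ about the $x_3$-axis, carries the meridian of argument $\theta_0$ to the real axis, and leaves $|g|$, $g\eta$ and $dg\cdot\eta$ unchanged, I may assume $\theta_0=0$ in the meridian case, i.e. $\gamma\in\r$.

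The argument then splits into four cases, according to whether $\gamma$ is real (meridian) or $|\gamma|=1$ (equator), and whether $\mu:=(g\eta)(\zeta')=\gamma\,\omega$ is real or imaginary; in each I would just substitute into the formula for $\dot c$. For instance, if $\gamma\in\r$ and $\mu\in\r$ then $\omega\in\r$ and $\dot c=((1-\gamma^2)\omega,0,2\gamma\omega)$, so $c$ lies in a plane $\{x_2=\mathrm{const}\}$; moreover $N$ has vanishing second component and $N\cdot\dot c=0$, so the tangent plane contains $e_2$ and $S$ meets $\{x_2=\mathrm{const}\}$ orthogonally. Completing the other three substitutions, the meridian/real and equator/imaginary cases produce a planar curve meeting a coordinate plane orthogonally, whereas the equator/real and meridian/imaginary cases give $\dot c$ of constant direction (along $e_3$, respectively $e_2$), i.e. a straight segment. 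In every case $c$ is a geodesic: a straight line has zero curvature, and for a curve lying in a plane met orthogonally by the surface the acceleration is parallel to the plane while the geodesic-curvature direction points out of it, so the geodesic curvature vanishes. This proves the first assertion.

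For the dichotomy (i)--(ii) I would evaluate the discriminant $(dg\cdot\eta)(\zeta')=\dot\gamma\,\omega$ in the same four cases: in the meridian case it equals $\dot\lambda\,\omega$, which is real iff $\omega$ is real (planar) and imaginary iff $\omega$ is imaginary (straight); in the equator case $\dot\gamma=i\dot\psi\,\gamma$ gives $\dot\gamma\,\omega=i\dot\psi\,\mu$, which is real iff $\mu$ is imaginary (planar) and imaginary iff $\mu$ is real (straight). Hence $(dg\cdot\eta)(\zeta')\in\r$ selects exactly the planar cases and $(dg\cdot\eta)(\zeta')\in i\,\r$ the straight ones, as claimed. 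The final, and only non-computational, step is to promote ``planar geodesic met orthogonally'' and ``straight segment'' to genuine symmetries of $S$: here I would invoke the Schwarz reflection principle for minimal surfaces, by which reflection across such a plane (resp. $180^\circ$ rotation about such a line) extends $X$ to an isometry of $S$. I expect the main obstacle to lie not in the case analysis but in making this reflection step precise --- in particular the real-analytic extendability of $X$ across $\zeta$ --- together with the opening reduction that aligns the great circle with the coordinate axes.
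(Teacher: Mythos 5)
The paper itself gives no proof of this proposition --- it is quoted from \cite{Karcher.1989} and \cite{Wohlgemuth.1991} --- so your attempt can only be measured against the standard literature argument, and for the first assertion you have essentially reproduced it correctly. The normalization $(g,\eta)\mapsto(e^{-i\theta_0}g,\,e^{i\theta_0}\eta)$ is indeed induced by a rigid rotation of the surface about the $x_3$-axis and fixes $|g|$, $g\eta$ and $dg\cdot\eta$, so reducing the meridian to the real axis is legitimate; your four substitutions into $\dot c=\Re\big((1-\gamma^2)\omega,\,i(1+\gamma^2)\omega,\,2\gamma\omega\big)$ are right (meridian/real and equator/imaginary give a curve in a vertical, respectively horizontal, plane which the surface meets orthogonally; meridian/imaginary and equator/real give $\dot c$ of constant direction); and the step ``curve in a plane met orthogonally by the surface $\Rightarrow$ geodesic'' is sound, since the acceleration stays parallel to that plane while the geodesic curvature is measured along $N\times\dot c$, which is orthogonal to it. The only loose ends there are the isolated points where $\gamma=0$, $\gamma=\infty$ or $\omega=0$, which are handled by continuity.

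The genuine gap is in items i)--ii). As stated, they concern an \emph{arbitrary} geodesic $\zeta$ on $M$, whereas your argument only verifies them for curves already satisfying the hypotheses of the first sentence, by checking the consistency of $\dot\gamma\,\omega$ across your four cases; nothing in your proof addresses a geodesic whose Gauss map image is not in a meridian or the equator. The missing idea is the link between $dg\cdot\eta$ and the second fundamental form: for Weierstrass data $(g,\eta)$ one has $II(v,v)=\Re\big[(dg\cdot\eta)(v,v)\big]$, so along any curve the real part of $(dg\cdot\eta)(\zeta',\zeta')$ is (up to a positive conformal factor) the normal curvature and its imaginary part is the geodesic torsion. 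Granting this, the general statement follows at once: if $\zeta$ is a geodesic and $(dg\cdot\eta)(\zeta')\in i\,\r$, then normal and geodesic curvature both vanish, so the curvature vector of $X\circ\zeta$ is zero and the curve is a straight line; if $(dg\cdot\eta)(\zeta')\in\r$, then $\zeta$ is a geodesic line of curvature, hence has vanishing torsion as a space curve and is planar, and the Schwarz reflection principle (which you invoke correctly) promotes its plane to a symmetry plane of $S$. For this paper's purposes your weaker version happens to suffice --- every curve to which Proposition~\ref{Karcher.1989} is applied in Tables~\ref{table1} and \ref{table4} does satisfy the meridian/equator hypotheses --- but it does not prove the proposition as stated, and the second-fundamental-form identity is precisely the ingredient you would need to add.
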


The following is another crucial result in the classical minimal surface theory.
\begin{theorem}[Schwarz Reflection Principle (see, for instance,  \cite{Fujimoto.2013, Karcher.1989})]
\noindent \begin{itemize}
\item [i)] If a minimal surface contains a segment of straight line $L$, then it is symmetric under
rotation by $\pi$ about $L$.
\item [ii)] If a nonplanar minimal surface contains a principal geodesic,  which is necessarily
a planar curve, then it is symmetric under reflection in the plane of that
curve.
\end{itemize}
\end{theorem}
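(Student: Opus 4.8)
The plan is to reduce both statements to the classical Schwarz reflection principle for harmonic functions, using that a conformal minimal immersion has harmonic coordinate functions. Recall that if $X=(x_1,x_2,x_3)\colon \Omega\to\mathbb{R}^3$ is a conformal (isothermal) parametrization of a minimal surface on a domain $\Omega\subset\mathbb{C}$, then minimality is equivalent to each $x_j$ being harmonic, while conformality means $X_u\cdot X_v=0$ and $|X_u|=|X_v|$, where $z=u+iv$. I will work locally near the given curve and then globalize by analytic continuation, since the $x_j$ are real-analytic and two minimal immersions agreeing on an open set coincide wherever both are defined.

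First I would normalize by a rigid motion. For part (i), place the straight segment $L$ along the $x_1$-axis, so that $x_2=x_3=0$ along $L$; for part (ii), place the plane of the principal geodesic as $\{x_3=0\}$, so that $x_3=0$ along the curve. In each case I choose isothermal coordinates on a neighborhood so that the curve corresponds to a segment $I$ of the real axis $\{v=0\}$, and I set $\Omega^+=\Omega\cap\{v>0\}$.

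The core step is to read off the boundary behavior of the $x_j$ on $I$. The coordinates that vanish on $I$ (namely $x_2,x_3$ in case (i), and $x_3$ in case (ii)) are harmonic on $\Omega^+$ and vanish on the boundary segment $I$, so by the reflection principle for harmonic functions they extend by \emph{odd} reflection, $x_j(u,-v)=-x_j(u,v)$. For the remaining coordinates I must instead establish the \emph{Neumann} condition $\partial_v x_j=0$ on $I$, which yields \emph{even} reflection $x_j(u,-v)=x_j(u,v)$. In case (i) this is immediate: the image tangent $X_u$ is parallel to $L$ along $I$, and $X_u\cdot X_v=0$ together with $\partial_u x_1\neq 0$ forces $\partial_v x_1=0$. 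In case (ii) this is the delicate point and uses the full hypothesis that the curve is a \emph{principal} geodesic: being simultaneously a geodesic ($\kappa_g=0$) and a line of curvature ($\tau_g=0$) forces the curve to be planar with the surface normal lying in its plane, i.e.\ the surface meets $\{x_3=0\}$ orthogonally along $I$; since $X_u\in\{x_3=0\}$ and $X_v\perp X_u$ with $|X_v|=|X_u|$, this gives $X_v\parallel e_3$, hence $\partial_v x_1=\partial_v x_2=0$ on $I$.

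Combining these reflections, the extension of $X$ to the lower half-neighborhood is, in case (i), $\tilde X(u,-v)=(x_1,-x_2,-x_3)(u,v)$, which is exactly the rotation by $\pi$ about $L$ applied to $X(u,v)$; and in case (ii), $\tilde X(u,-v)=(x_1,x_2,-x_3)(u,v)$, the reflection in the plane $\{x_3=0\}$. Thus the surface is invariant under the asserted isometry near the curve, and analytic continuation promotes this local symmetry to a global one, completing the argument. The main obstacle is the Neumann condition in part (ii): one must justify, from ``geodesic plus line of curvature,'' both that the curve is planar and that the immersion is orthogonal to that plane along the curve, and one must also confirm enough boundary regularity of $X$ on $\overline{\Omega^+}$ to apply the harmonic reflection principle and the subsequent analytic continuation.
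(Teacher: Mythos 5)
The paper does not prove this statement at all: it is quoted as a classical result with pointers to \cite{Fujimoto.2013, Karcher.1989}, so there is no in-paper argument to compare against. Your proposal is the standard proof found in those references, and it is correct in outline: minimality in isothermal parameters makes each coordinate $x_j$ harmonic, the coordinates vanishing along the curve extend by odd reflection (Dirichlet data), and the remaining ones extend evenly once the Neumann condition $\partial_v x_j=0$ on $I$ is extracted from conformality. Your handling of the two genuinely delicate points is right. In (i), along $I$ one has $X_u=(\partial_u x_1,0,0)$, so $X_u\cdot X_v=0$ plus regularity ($|X_u|=|X_v|\neq 0$) forces $\partial_v x_1=0$. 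In (ii), the geodesic condition makes the acceleration parallel to the surface normal $N$, while a curve lying in $\{x_3=0\}$ has its acceleration in that plane; hence $N$ lies in the plane wherever the curvature is nonzero, the conormal $N\times T$ is parallel to $e_3$, and $X_v\parallel e_3$ along $I$, giving the Neumann condition for $x_1,x_2$ (the line-of-curvature hypothesis is what kills the torsion and makes the curve planar in the first place, via $\tau=\tau_g=0$ for a geodesic; isolated zeros of the curvature are absorbed by real-analyticity, and if the curvature vanishes identically the curve is a straight line and case (i) applies). One simplification worth noting: since the line or geodesic lies in the \emph{interior} of the surface, $X$ is already defined and real-analytic across $I$, so the boundary-regularity worry you raise at the end does not actually arise; rather than constructing an extension by reflection, you can simply observe that $X(u,v)$ and $R\,X(u,-v)$ (with $R$ the asserted rotation or reflection of $\mathbb{R}^3$) are harmonic maps with identical Cauchy data — values and $v$-derivatives — along the analytic arc $I$, hence coincide by unique continuation, and connectedness plus real-analyticity of the immersion then globalizes the identity exactly as you say.
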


\section{One-parameter family of minimal surfaces with genus one and three ends}\label{sec3}

Now, making use of the theory presented in the previous section,  we will provide a proof of the existence of a family of genus one complete minimal surfaces, immersed in $\mathbb{R}^3$, with finite total curvature and three ends. More precisely, we prove the following:
\begin{theorem}\label{teo3}
There exists a one-parameter family  $S_x$, $x \in \mathbb{R}$,  of complete minimal surfaces immersed in $\mathbb{R}^3$ of genus one, with finite total curvature and three ends. Moreover, this family contains two sub-families with the following properties:
	\begin{enumerate}
	\item  The surfaces $S^1_x=S_{(x,y(x))}$, where
	$$
	y(x)=x, \qquad  |x| < \sqrt{\sqrt{8}-1}\,e_1,
	$$
which have symmetries ruled by the dihedral group $\mathcal{D}(4)$ of order $8$, generated by reflections in the $(x_1,x_3)$ and $(x_2,x_3)$ planes and rotations of $\pi$ radians about the lines $x_1\pm x_2=x_3=0$.
Also,
\begin{itemize}
    \item [i)] If $|x|\neq e_1$,  then $S^1_x$ has total curvature  $-20 \pi$, two Enneper-type ends and one middle planar end;
    \item [ii)] If $|x| = e_1$,  then $S^1_x$ is precisely the Costa surface.
  \end{itemize}
\item The surfaces $S^2_x=S_{(x,y(x))}$,  where
	$$
 y(x)=-5e_1^2/x, \qquad x\neq 0,
	$$
whose symmetries are generated by the reflections about two vertical, and mutually orthogonal, planes. Additionally,
\begin{itemize}
    \item [i)] If $|x|\neq e_1$ and $|x|\neq 5e_1$,  then $S^2_x$ has total curvature  $-20 \pi$, two Enneper-type ends and one middle planar end;
    \item [ii)] If $|x| = e_1$ or $|x| = 5e_1$,  then $S^2_x$ has total curvature  $-16 \pi$, one catenoid-type end, one Enneper-type end and one middle planar end.
  \end{itemize}
	\end{enumerate}

 \end{theorem}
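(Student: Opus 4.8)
The plan is to produce every surface $S_{(x,y)}$ through the Enneper--Weierstrass representation of Theorem~\ref{W}, with underlying compact Riemann surface the square torus $\overline{M}=T^{2}=\mathbb{C}/\mathcal{L}$, $\mathcal{L}=[1,i]$, punctured at the three points that model the ends: $z=0$, which will carry the middle \emph{planar} end, and the two half-periods $z=1/2$ and $z=i/2$, which will carry the two outer ends. Starting from Costa's data $\overline{g}=a/\wp'$ and $\eta=\wp\,dz$ (for which $\overline{g}$ has a single interior simple pole at $w_{2}=(1+i)/2$ matched by the double zero of $\eta$ there, as required by Theorem~\ref{W}), I would deform the pair $(\overline{g},\eta)$ by two real parameters $x,y$ so as to raise the order of the poles of $\phi_{1},\phi_{2},\phi_{3}$ at $z=1/2$ and $z=i/2$ from two to four. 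The deformation is naturally expressed through the functions $1/(\wp-e_{1})$ and $1/(\wp+e_{1})$, which have double poles at $1/2$ and $i/2$ respectively and whose squares are the left-hand sides of \eqref{pe1bis}--\eqref{pe3bis}; the parameter $x$ governs the end at $1/2$ and $y$ the end at $i/2$. The first task is to verify the regularity hypothesis of Theorem~\ref{W}---that on $M$ every pole of $\overline{g}$ is matched by a double zero of $\eta$ and that $\eta$ has no further zero---which simultaneously fixes the admissible form of the deformation and forces the degeneration coefficients to be proportional to $x^{2}-e_{1}^{2}$ and $y^{2}-e_{1}^{2}$.

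The heart of the argument is the period problem \eqref{PP1}, imposed over the homology basis $\alpha_{1},\alpha_{2}$ of $T^{2}$ and over small loops around the three punctures. For the loop periods only the residues of the $\phi_{j}$ contribute, and the requirement that these residues be real forces the residue of $\overline{g}\,\eta$ to vanish at $z=0$ (characterizing the planar end) while imposing further constraints at the two outer ends. For the homology periods I would expand each $\phi_{j}$ into Weierstrass monomials and reduce every contribution to the integrals $\int_{\alpha_{1}}\wp\,dz=-\pi$ and $\int_{\alpha_{2}}\wp\,dz=i\pi$ of Proposition~\ref{prop6}, using \eqref{pe1} and \eqref{pe1bis}--\eqref{pe3bis} of Proposition~\ref{prop2} to rewrite the $(\wp\pm e_{1})^{-1}$ and $(\wp\pm e_{1})^{-2}$ terms. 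I expect the whole system to collapse to a single relation between $x$ and $y$ that factors as $(y-x)(xy+5e_{1}^{2})=0$, so that its two irreducible branches $y=x$ and $xy=-5e_{1}^{2}$ are precisely the sub-families $S^{1}_{x}$ and $S^{2}_{x}$. This reduction is the main obstacle, both for the amount of elliptic bookkeeping it demands and because one must certify that these two branches exhaust the solution set. On the branch $y=x$ the additional requirement that $\eta$ acquire no \emph{unmatched} zero on $M$---so that $X$ has no branch point---is a discriminant condition that I expect to hold exactly for $|x|<\sqrt{\sqrt{8}-1}\,e_{1}$.

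Completeness is then checked through \eqref{PS0}: along any path running into a puncture, $(1+|\overline{g}|^{2})|\eta|$ is non-integrable because of the pole orders just arranged. The total curvature follows from \eqref{CT} once $\deg\overline{g}$ is counted, and the end types are read from the pole orders of the $\phi_{j}$: an order-two pole yields an embedded end (planar at $z=0$, catenoidal where the residue of $\overline{g}\,\eta$ is real and nonzero), while an order-four pole yields an Enneper-type end of winding order three. For generic parameters both outer ends are Enneper, so $\deg\overline{g}=5$ and $C_{T}=-20\pi$, consistent with the classical count $-C_{T}/2\pi=2\,\mathrm{genus}-2+\sum_{j}(I_{j}+1)$ with winding orders $1,3,3$. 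The critical moduli occur where a degeneration coefficient vanishes: on the branch $y=x$ both order-four poles collapse to order two simultaneously at $|x|=e_{1}$, restoring the two catenoidal ends and recovering the Costa surface, with $\deg\overline{g}=3$ and $C_{T}=-12\pi$; on the branch $xy=-5e_{1}^{2}$ the collapse can occur at only one end at a time---at $|x|=e_{1}$ (where $|y|=5e_{1}$) or at $|x|=5e_{1}$ (where $|y|=e_{1}$)---leaving one catenoidal and one Enneper end with $\deg\overline{g}=4$ and $C_{T}=-16\pi$.

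It remains to read off the symmetries. Proposition~\ref{Prop3} lists the symmetries of $\wp$ about $w_{2}$---the quarter-turn $\rho$ and the reflections $\beta,\mu,\rho\circ\beta,\rho^{2}\circ\beta$---and each of these induces a transformation of $(\overline{g},\eta)$ that, combined with the ambient orthogonal maps $K$ and $L_{\theta}$ of \eqref{matrix}, is an isometry of the surface. On the symmetric branch $y=x$ the quarter-turn $\rho$, which interchanges the two outer ends $1/2\leftrightarrow i/2$, also preserves the deformed data, so the surface inherits the full dihedral group $\mathcal{D}(4)$ of order eight, generated by the two vertical reflections and the two diagonal $\pi$-rotations about $x_{1}\pm x_{2}=x_{3}=0$; on the branch $xy=-5e_{1}^{2}$ the interchange of the ends is broken and only the two orthogonal vertical reflections survive. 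To promote these conformal symmetries to Euclidean ones I would invoke Proposition~\ref{Karcher.1989}: the images under $X$ of the fixed-point sets of $\beta,\mu,\dots$ are principal geodesics, planar curves of symmetry when $(d\overline{g}\cdot\eta)(\zeta')\in\mathbb{R}$ and straight lines when $(d\overline{g}\cdot\eta)(\zeta')\in i\,\mathbb{R}$, and the Schwarz Reflection Principle then extends them to the stated reflectional and rotational symmetries, completing the proof.
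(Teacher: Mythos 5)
Your strategy is the paper's strategy in outline---the same punctured square torus with ends at $\pi(0)$, $\pi(1/2)$, $\pi(i/2)$, a two-parameter deformation of Costa's data, periods reduced through Propositions~\ref{prop2} and \ref{prop6}, and symmetries obtained from Propositions~\ref{Prop3}, \ref{Karcher.1989} and the Schwarz reflection principle---but two gaps in your proposal are genuine. First, you never write down the Weierstrass pair, and everything that makes the theorem true is a computation with a specific pair. The paper takes
\begin{equation*}
g = c\,\frac{\wp'}{\wp\,(\wp+x)\,(\wp-y)},\qquad
\eta = \frac{\wp\,(\wp+x)^2\,(\wp-y)^2}{(\wp-e_1)^2\,(\wp+e_1)^2}\,dz ,
\end{equation*}
which reduces to Costa's data at $x=y=e_1$; it then decomposes the resulting $\phi_j$ into partial fractions, integrates them via Proposition~\ref{prop2}, and evaluates the $\alpha_1,\alpha_2$ periods via Proposition~\ref{prop6}, obtaining the two conditions $F_1+2c^2/e_1^2=0=F_2+2c^2/e_1^2$ with explicit polynomials $F_1,F_2$ in $x,y$. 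The relation $(y-x)(xy+5e_1^2)=0$ is the difference of these two equations, so the fact that the two branches exhaust the solution set is immediate. In your proposal this factorization, which is exactly what produces and distinguishes the sub-families $S^1_x$ and $S^2_x$, is only ``expected''; without concrete data there is nothing to compute, and you yourself flag this as the main obstacle. A sketch that postulates the answer to the period problem does not prove the theorem.

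Second, your explanation of the parameter range is wrong. You attribute the bound $|x|<\sqrt{\sqrt{8}-1}\,e_1$ on the branch $y=x$ to a regularity condition (no unmatched zero of $\eta$, hence no branch point). In fact the regularity hypothesis of Theorem~\ref{W} holds on this whole branch; the bound comes from the solvability of the period problem. The constant $c$ multiplying $g$ is the unknown in the two period equations---a point missing from your sketch---and solving them forces
\begin{equation*}
c(x)=\frac{\sqrt{2\pi}}{4e_1}\sqrt{8e_1^4-(x^2+e_1^2)^2},
\end{equation*}
which is real and positive precisely when $|x|<\sqrt{\sqrt{8}-1}\,e_1$. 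At $|x|=\sqrt{\sqrt{8}-1}\,e_1$ the surface does not become branched: the periods of $\phi_1,\phi_2$ can no longer be closed, and one obtains doubly periodic minimal surfaces, as noted in the paper's remark following Theorem~\ref{teo3}. The same mechanism, with $c(x)=\frac{\sqrt{2\pi}}{4|x|}\sqrt{(x^2+e_1^2)^2+24e_1^4}$, explains why the branch $y=-5e_1^2/x$ carries no restriction beyond $x\neq 0$.
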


\begin{proof}
Let $T^2=\mathbb{C}/\mathcal{L}$ be the torus with complex structure induced by the canonical projection $\mathfrak{p}: \mathbb{C} \rightarrow T^2.$ 
We consider $M=T^2 -\{p_1, p_2,p_3\}$,  where
$p_1=\mathfrak{p}(1/2)$,  $p_2=\mathfrak{p}(0)$ and $p_3=\mathfrak{p}(i/2)$.
The Weierstrass data  $(g, \eta)$ is given by
\begin{equation}\label{WR-WW}
\left\{
\begin{aligned}
	 g &= c \, \frac{\wp'}{\wp\, (\wp +x)\,(\wp-y)},  \qquad c >0,\\
	\mathbf \eta&=\frac{\wp\, (\wp +x)^2\,(\wp-y)^2}{(\wp-e_1)^2\,(\wp+e_1)^2} \, dz.
	\end{aligned}
	\right.
\end{equation}

 Figure~\ref{Fig1} shows the zeros and poles of $g$, $\eta$ and $dh=g \eta$ on $F$ in the case $|x|\neq e_1$ and $|y|\neq e_1$.
\begin{figure}[h!]\label{Fig1}
\subfigure[Zeros and poles of $g$.]{
\centering
\includegraphics[totalheight=3.6cm]{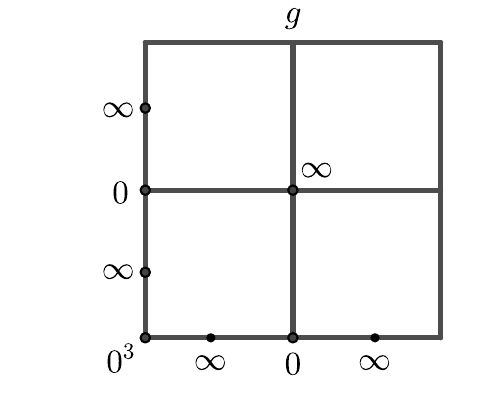}}
\subfigure[Zeros and poles of $\eta$.]{
\includegraphics[totalheight=3.6cm]{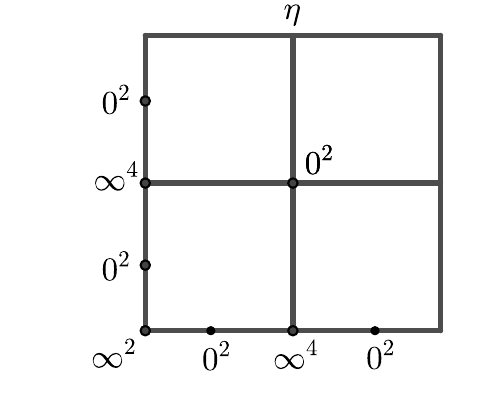}}
\subfigure[Zeros and poles of $dh$.]{
\includegraphics[totalheight=3.6cm]{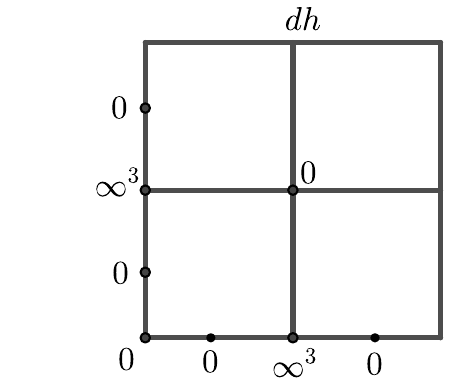}}
\caption{}
\end{figure}

The degree of the Gauss map $g$ can be equal to $3$ (if $y=x=\pm e_1$), $4$ (if $|x|=e_1$ or  $|y| = e_1$) and 
$5$ (if $|x|\neq e_1$ and $|y|\neq e_1$).  It results that the total curvature
\begin{equation}\label{CT2}
C_T(S_x):=\int_M K \, dA = -4 \pi \,\text{deg}(g),
\end{equation}
can be equal to $-12\pi$, $-16\pi$ and $-20\pi$, respectively.\\

By using \eqref{WR}, \eqref{EDPW} and \eqref{WR-WW},  we obtain
$$
\left\{
\begin{aligned}
\phi_1&=\frac{\wp\, (\wp +x)^2\,(\wp-y)^2}{(\wp-e_1)^2\,(\wp+e_1)^2} \, dz - \frac{4c^2}{(\wp-e_1)(\wp+e_1)}\,dz,\\
\phi_2&=\frac{i\wp\, (\wp +x)^2\,(\wp-y)^2}{(\wp-e_1)^2\,(\wp+e_1)^2} \, dz +  \frac{4 i \,c^2}{(\wp-e_1)(\wp+e_1)}\,dz,\\ 
      \phi_3&=2c\,\frac{\wp'\, (\wp +x)\,(\wp-y)}{(\wp-e_1)^2\,(\wp+e_1)^2} \, dz.
\end{aligned}
\right.
$$
Therefore,  decomposing in partial fractions,  we can write
$$
\left\{
\begin{aligned}
\phi_1&=\Big[\wp+ 2(x-y)+\frac{(e_1-x)(e_1+y)(2e_1+y-x)}{2e_1 (\wp+e_1)}+\frac{(e_1+x)(e_1-y)(2e_1-y+x)}{2e_1 (\wp-e_1)}\\& -\frac{(e_1-x)^2(e_1+y)^2}{4e_1 (\wp+e_1)^2}+\frac{(e_1+x)^2(e_1-y)^2}{4e_1 (\wp-e_1)^2}\Big]\, dz - \frac{2c^2}{e_1}\Big(\frac{1}{\wp-e_1}-\frac{1}{\wp+e_1}\Big) \,dz,\\  
\phi_2&=i\Big[\wp+ 2(x-y)+\frac{(e_1-x)(e_1+y)(2e_1+y-x)}{2e_1 (\wp+e_1)}+\frac{(e_1+x)(e_1-y)(2e_1-y+x)}{2e_1 (\wp-e_1)}\\& -\frac{(e_1-x)^2(e_1+y)^2}{4e_1 (\wp+e_1)^2}+\frac{(e_1+x)^2(e_1-y)^2}{4e_1 (\wp-e_1)^2}\Big]\, dz + \frac{2i\,c^2}{e_1}\Big(\frac{1}{\wp-e_1}-\frac{1}{\wp+e_1}\Big) \,dz,\\  
\phi_3&=\frac{c}{2e_1^3}\Big[(e_1^2+xy)\, \Big(\frac{\wp'}{\wp-e_1}-\frac{\wp'}{\wp+e_1}\Big)
+\frac{e_1\,(e_1+x)(e_1-y)\wp'}{(\wp-e_1)^2}+\frac{e_1\,(e_1-x)(e_1+y)\wp'}{(\wp+e_1)^2}\Big]\, dz.
\end{aligned}
\right.
$$
Now,  using the formulas of Proposition~\ref{prop2}, we get
\begin{equation*}
\begin{aligned}
\int \phi_1&=-\zeta(z)+ 2(x-y)z+\frac{(e_1-x)(e_1+y)(2e_1+y-x)\,[e_1 z-\zeta(z-i/2)]}{4e_1^3}\\&-\frac{(e_1+x)(e_1-y)(2e_1-y+x)[e_1 z+\zeta(z+1/2)]}{4e_1^3}\\& -\frac{(e_1-x)^2(e_1+y)^2}{4e_1}\,\Big[\frac{\wp'(z-i/2)}{24e_1^4}-\frac{\zeta(z-i/2)}{2e_1^3}+\frac{z}{3e_1^2}\Big]\\& +\frac{(e_1+x)^2(e_1-y)^2}{4e_1}\,\Big[\frac{\wp'(z-1/2)}{24e_1^4}+\frac{\zeta(z-1/2)}{2e_1^3}+\frac{z}{3e_1^2}\Big] \\&-c^2 \,\frac{\zeta(z-i/2)-\zeta(z-1/2)-2e_1z}{e_1^3},
\end{aligned}
\end{equation*}
\begin{equation*}
\begin{aligned}
\int \phi_2&=i\,\Big[-\zeta(z)+ 2(x-y)z+\frac{(e_1-x)(e_1+y)(2e_1+y-x)\,[e_1 z-\zeta(z-i/2)]}{4e_1^3}\\&-\frac{(e_1+x)(e_1-y)(2e_1-y+x)[e_1 z+\zeta(z+1/2)]}{4e_1^3}\\& -\frac{(e_1-x)^2(e_1+y)^2}{4e_1}\,\Big(\frac{\wp'(z-i/2)}{24e_1^4}-\frac{\zeta(z-i/2)}{2e_1^3}+\frac{z}{3e_1^2}\Big)\\& +\frac{(e_1+x)^2(e_1-y)^2}{4e_1}\,\Big(\frac{\wp'(z-1/2)}{24e_1^4}+\frac{\zeta(z-1/2)}{2e_1^3}+\frac{z}{3e_1^2}\Big)\Big] \\&+i\, c^2\, \frac{\zeta(z-i/2)-\zeta(z-1/2)-2e_1z}{e_1^3},
\end{aligned}
\end{equation*}
\begin{equation*}
\begin{aligned}
\int \phi_3&=\frac{c}{2e_1^3}\Big[(e_1^2+xy)\ln \Big(\frac{\wp-e_1}{\wp+e_1}\Big)-\frac{e_1(e_1+x)(e_1-y)}{\wp-e_1}-\frac{e_1(e_1-x)(e_1+y)}{\wp+e_1}\Big].
\end{aligned}
\end{equation*}\\

As $\wp$ is periodic,  $\int\phi_3$ is also periodic and then 
$$\int_{\alpha_i} \phi_3=0, \qquad i=1,2.$$
From Proposition~\ref{prop6}, it follows that
\begin{equation*}
\Re\int_{\alpha_2} \phi_1= \Re\int_{\alpha_1} \phi_2 =0
\end{equation*}
and
\begin{equation}\label{P12}
\int_{\alpha_1} \phi_1=F_1+\frac{2c^2}{e_1^2},\qquad
\int_{\alpha_2} \phi_2=F_2+\frac{2c^2}{e_1^2},
\end{equation}
where 
$$\left\{
\begin{aligned}
F_1&=\frac{10\, e_1^4\,(x-y)-21\pi e_1^4-3\pi e_1^2\, (x^2+y^2)+2e_1^2\, xy\,(x-y)+12\pi e_1^2\, xy+3\pi x^2y^2}{12\, e_1^4},\\
F_2&=\frac{10\, e_1^4\,(y-x)-21\pi e_1^4-3\pi e_1^2\, (x^2+y^2)+2e_1^2\, xy\,(y-x)+12\pi e_1^2\, xy+3\pi x^2y^2}{12\, e_1^4}.
\end{aligned}
\right.$$
Therefore \eqref{PP1} holds if, and only if $F_1+2c^2/e_1^2=0=F_2+2c^2/e_1^2$, that is
\begin{equation*}
(y-x)\,(xy+5e_1^2)=0,
\end{equation*}
and also
$$c(x,y)=\frac{e_1}{2}\sqrt{-(F_1+F_2)}=\frac{\sqrt{2\pi}}{4e_1}\sqrt{7e_1^4+(x^2+y^2-4xy)e_1^2-x^2y^2}.$$
Consequently, 
\begin{enumerate}
\item if $y=x$,  we get $$c(x)=\frac{\sqrt{2\pi}}{4e_1}\sqrt{8e_1^4-(x^2+e_1^2)^2}, \qquad x\in \Big(-\sqrt{\sqrt{8}-1}\,e_1,\sqrt{\sqrt{8}-1}\,e_1\Big);$$
\item if $y=-5e_1^2/x$,  we get $$c(x)=\frac{\sqrt{2\pi}}{4\,|x|}\sqrt{(x^2+e_1^2)^2+24e_1^4}, \qquad x\neq 0.$$
\end{enumerate}

The functions $\phi_1$, $\phi_2$ and $\phi_3$ have poles of order at least two at $p_1=\mathfrak{p}(1/2)$, $p_2=\mathfrak{p}(0)$ and $p_3=\mathfrak{p}(i/2)$. Hence, these functions have zero residues at $p_1, p_2$ and $p_3$, ensuring that the surfaces $S_x$ have no real periods around them. Therefore we obtain a one-parameter family of complete minimal surfaces $S_x$ containing two sub-families that will be indicated with $S^{n}_x$, $n=1,2$.  In  Figures~\ref{Fig-sup} and \ref{Sup_Cat_P_Enn} we present some pictures of minimal surfaces of the sub-families $S_x^1$ and $S_x^2$, respectively. 
\\

Finally,  in order to study the symmetries of the surfaces $S_x$,  we consider on $F$ the following curves:
\begin{equation}\label{eqcurva}
\begin{array}{lll}
&\tau_1(u)=u, \ 0<u<\sfrac{1}{2}, \qquad &\tau_2(u)=u, \ \sfrac{1}{2}<u<1,\\
&\tau_3(u)=\frac{i}{2}+u, \ 0<u<1, \qquad &\tau_4(u)=iu, \ 0<u<\sfrac{1}{2},\\
&\tau_5(u)=iu, \ \sfrac{1}{2}<u<1, \qquad &\tau_6(u)=\frac{1}{2}+iu, \ 0<u<1,\\
&\tau_7(u)=u+i(1-u), \ 0<u<1, \qquad &\tau_8(u)=u+iu, \ 0<u<1.
\end{array}
\end{equation}

Now one easily writes the expressions of the differential $dh=g \eta$ and $(dg\cdot \eta)$ as
$$\begin{aligned}
dh&=2c\,\sqrt{\frac{\wp}{(\wp^2-e_1^2)^3}}\, (\wp+x)(\wp-y)\, dz,\\
dg\cdot \eta&=\frac{2c\,[(x-y)\,(3\wp\,e_1^2-\wp^3)+\wp^2\,(5e_1^2-3\wp^2)-xy\, (\wp^2+e_1^2)]}{(\wp^2-e_1^2)^2}\,dz^2.
\end{aligned}$$
In Table~\ref{table1} we summarize the behavior of $g$, $dh$ and   $dg\cdot \eta$ along the path $\tau_j$, $j=1,\dots, 8$.
\begin{table}[h!]\caption{}
\renewcommand*{\arraystretch}{1.8}
\centering
\begin{tabular}{|c|c|c|c|c|}
 \hline
 \quad \text{Sub-family} $S^{n}_x$\quad  & \text{Path}\; $\tau_j$ & $g \circ \tau_j$  & $dh(\tau_j')$ & $(dg\cdot \eta)(\tau_j')$\\
 \hline
$n=1,2$ & $ j=1,2,3$  & $\r$    & $\r$ & $\r$ \\
 \hline
$n=1,2$ & $j=4,5,6$  & $\r$    & $i\r$& $\r$  \\
 \hline
 $n=1$ &  $j=7$ & $e^{\pm i\frac{\pi}{4}}\,\r$ & $i\r$ & $i\r$  \\ 
 \hline
$n=1$ & $j=8$ &$e^{\pm i\frac{\pi}{4}}\,\r$ &$\r$ &$i\r$  \\
 \hline
\end{tabular}
\label{table1}
\end{table}

From Proposition~\ref{Karcher.1989} and Table~\ref{table1}, we have that the curves
$\Gamma_j:=X\circ \tau_j$, $j=1,\ldots,6$, are planar geodesics of $S_x$. We can easily show that $\Gamma_j$, $j=1,2,3,$ are contained in a plane 
parallel to the $(x_1,x_3)$-plane,  $\Gamma_j$, $j=4,5,6,$ are contained in a plane parallel to the $(x_2,x_3)$-plane and $\Gamma_j, \ j=7,8,$ are contained in the lines $x_1\pm x_2=x_3=0$ of $S^1_x$. The Schwarz Reflection Principle for minimal surfaces implies that the surfaces $S^1_x$ have the $(x_1, x_3)$-plane and the $(x_2, x_3)$-plane as reflective planes of symmetry and, also, they are invariant under rotations by $\pi$ about the lines $x_1\pm x_2=x_3=0$.  Furthermore, the surfaces $S^2_x$ have only the $(x_1, x_3)$-plane and the $(x_2, x_3)$-plane as reflective planes of symmetry. \\

Finally,  to determine the symmetry group of the surfaces $S^1_x$,  we introduce the dihedral group $\mathcal{D}(4)$ of order eight which is generated by $\beta$ and $\rho$. As the generators $\beta$ and $\rho$ can be identified with the orthogonal motions given by
$$
K=\left[
	\begin{array}{ccc}
	1&0&0\\
	0&-1&0\\
	0&0&1
	\end{array}
	\right], \qquad
	L_{\pi/2}=\left[
	\begin{array}{ccc}
	0&-1&0\\
	1&0&0\\
	0&0&-1
	\end{array}
	\right],
$$
	it results that the group $\mathcal{D}(4)=\langle L_{\pi/2}^j \, K^\ell \rangle$, $ j=0, \ldots, 3$,  $\ell=0,1$, acts on $\r^3$.
Now,  from \eqref{PS}, \eqref{WR-WW} and Proposition~\ref{Prop3},  we have on $S^1_x$ the following properties:
$$\begin{aligned}
X(\beta(w_2+z))& =( X_1, X_2,X_3)(\beta(w_2+z))=(X_1,-X_2,X_3)(w_2+z),\\
X(\rho(w_2+z))&=( X_1, X_2,X_3)(\rho(w_2+z))=( -X_2, X_1,-X_3)(w_2+z).
\end{aligned}
$$
Therefore,  as $X \circ \beta = K\,  X^t$ and $X \circ \rho =  L_{\pi/2} \, X^t$, we conclude that the symmetry group of $S^1_x$ is $\mathcal{D}(4)$ and this completes the proof. 
\end{proof}

\begin{figure}[h!]
\subfigure[\label{Fig_a}]{
\includegraphics[totalheight=5.4cm]{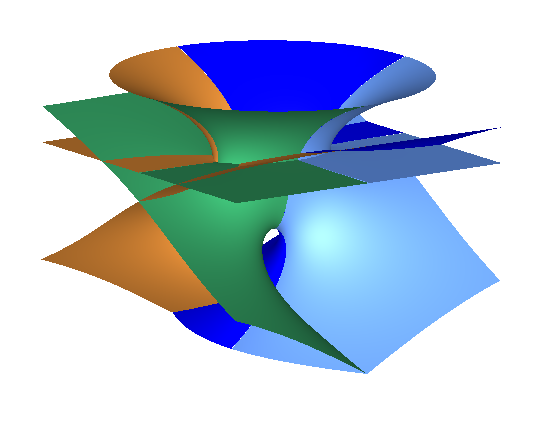}}
\subfigure[\label{Fig_b}]{
\includegraphics[totalheight=5.2cm]{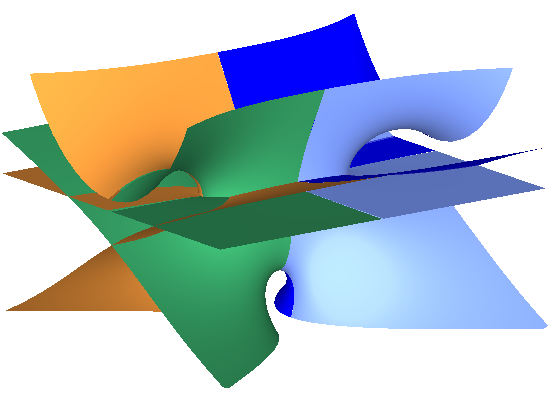}}
\caption{Computer graphics of the genus one minimal surfaces $S_x^2$ obtained for: (a)  $x=-e_1$ and (b)  $x=-e_1-0.5$.}\label{Sup_Cat_P_Enn}
\label{Fig4ab}
\end{figure}

\section{A family of high genus minimal surfaces with three ends}\label{Sec4}

The aim of this section is to find a one-parameter family  $\Sigma_{k,t}$ of complete minimal surfaces in $\mathbb{R}^3$ of genus $k$,  with finite total curvature and three ends, being one planar and two of Enneper-type, or one planar and two of catenoid-type.  Such family contains the one-parameter family of surfaces described in the Theorem~\ref{TeoA} and the Costa-Hoffman-Meeks embedded minimal surfaces $M_k$, of genus $k\geq 1$,  constructed in the Main Theorem of \cite{Hoffman.1990}.  The existence of this family is the main result of this paper,  summarized in the following theorem.

\begin{theorem}\label{Teo4}
For every integer $k \geq 1$, there exists a one-parameter family  $\Sigma_{k,t}$ of complete  
  minimal surfaces in $\mathbb{R}^3$ of genus $k$,  with finite total curvature and three ends.  Moreover such surfaces have the following properties:
  \begin{enumerate}
	\item If $|t| = 1$, then the minimal surfaces $\Sigma_{k,\pm 1}$ are precisely the Costa-Hoffman-Meeks embedded minimal surfaces  $M_k$.
	\item  If $|t| \neq 1$ and $|t| < \sqrt{2\sqrt{k+1}-1},$ then the immersed minimal surfaces $\Sigma_{k,t}$ have total curvature $C_T=-4\pi(3k+2)$,  two Enneper-type ends and one middle planar end.
	\item All the minimal surfaces $\Sigma_{k,t}$, $|t| < \sqrt{2\sqrt{k+1}-1},$ are symmetric by reflection about each of the $(k+1)$ vertical planes meeting in the $x_3$-axis and by rotation by $\pi$ radians about one of the $(k+1)$ straight lines on the surfaces in the $(x_1,x_2)$-plane.
	\item The symmetry group of $\Sigma_{k,t}$ is  the dihedral group $\mathcal{D}(2k+2)=\langle L_\theta^j \, K^\ell \rangle, \ j=0, \ldots , (2k+1)$,  $\ell=0,1$,  which has $4(k + 1)$ elements.
	\end{enumerate}
 \end{theorem}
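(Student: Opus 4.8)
The plan is to run the construction of Theorem~\ref{teo3} over a genus $k$ Riemann surface equipped with a $\mathbb{Z}_{k+1}$-symmetry, so that the three-ended, finite-total-curvature structure and the whole period computation descend to the torus case already treated. Concretely, I would realize $\overline{M}_k$ as the $(k+1)$-fold cyclic branched cover of the $\wp$-sphere defined by $u^{k+1}=\wp^{k}(\wp-e_1)(\wp+e_1)$, branched over the four points $\{0,e_1,-e_1,\infty\}$. Since each of these four branch points is totally ramified, Riemann--Hurwitz gives $2-2g=2(k+1)-4k$, hence $g=k$; moreover for $k=1$ one has $u=\wp'/2$ and $\overline{M}_1=T^2$, so this recovers the surface of Section~\ref{sec3}. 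On $M=\overline{M}_k\setminus\{p_1,p_2,p_3\}$, with the punctures lying over $\pi(1/2),\pi(0),\pi(i/2)$, I would take Weierstrass data generalizing \eqref{WR-WW}, built from $u$, $\wp$ and a parameter $x$ normalized so that the Costa--Hoffman--Meeks member occurs at $|x|=1$ (this absorbs the factor $e_1$ visible in Theorem~\ref{TeoA}). The deck transformation $u\mapsto e^{2\pi i/(k+1)}u$ generates the ambient $\mathbb{Z}_{k+1}$ rotational symmetry.

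The next step is to verify the hypotheses of the Enneper--Weierstrass Representation (Theorem~\ref{W}): that $g$ is meromorphic, that $\eta$ is a meromorphic one-form with a zero of order $2j$ exactly at each pole of order $j$ of $g$ and no other zeros on $M$, and that the completeness condition \eqref{PS0} holds at the three ends. The decisive point is the period problem \eqref{PP1}. Here the $\mathbb{Z}_{k+1}$-symmetry is what makes the high-genus case tractable: the forms $\phi_j$ transform by a rotation under the deck map, so the real period of any cycle is governed by its image in the quotient, collapsing the computation to the two torus periods already evaluated in Proposition~\ref{prop6} together with the residue-free condition at the punctures. Forcing the surviving real periods to vanish yields a single scalar equation determining the constant $c=c(x)$, and imposing $c(x)^2>0$ produces exactly the admissible range $|x|<\sqrt{2\sqrt{k+1}-1}$, with $|x|=1$ as the distinguished value.

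For the geometric statements, the total curvature follows from \eqref{CT}: counting the poles of $g$ gives $\deg(\overline{g})=3k+2$ generically, so $C_T=-4\pi(3k+2)$, establishing item (2); the end at $p_2=\pi(0)$ is planar and the two ends over $\pi(1/2),\pi(i/2)$ are of Enneper-type, as read off from the orders of the pole of $g$ and the zero of $\eta$ there. When $|x|=1$, the order of $g$ at the two outer ends drops, $\deg(\overline{g})$ falls to $k+2$, the two Enneper-type ends become catenoidal, and the Weierstrass data matches that of $M_k$, proving item (1). For the symmetries (items (3) and (4)) I would generalize the curves $\zeta_j$ of \eqref{eqcurva} and the sign table (Table~\ref{table1}) to the cover: the $(k+1)$ meridian curves and the $(k+1)$ curves meeting the axis are, by Proposition~\ref{Karcher.1989}, planar geodesics and straight lines, so the Schwarz Reflection Principle supplies the $(k+1)$ reflection planes and $(k+1)$ rotation axes; identifying the generators with the orthogonal maps $K$ and $L_{\theta}$, $\theta=\pi/(k+1)$, exactly as in the genus-one argument, then gives the symmetry group $\mathcal{D}(2k+2)$ of order $4(k+1)$.

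I expect the period problem to be the main obstacle. Unlike the Chen--Gackstatter-type generalizations, where the analogous periods had to be solved numerically, the large symmetry group and the three-end structure should permit a closed-form evaluation by reduction to $\int_{\alpha_i}\wp\,dz$. The delicate part is to confirm that the cover is arranged so that every relevant homology class descends correctly under the quotient (so that the reduction to two periods is legitimate), and then to check that the resulting equation for $c(x)$ admits a positive solution precisely on the stated interval, the endpoint $|x|=\sqrt{2\sqrt{k+1}-1}$ corresponding to a degeneration analogous to the doubly-periodic limit noted for $S^1_x$.
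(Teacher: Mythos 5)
Your setup is essentially a rescaling of the paper's: the curve $u^{k+1}=\wp^k(\wp-e_1)(\wp+e_1)$ over the $\wp$-sphere becomes the paper's $\overline{M}_k=\{w^{k+1}=z^k(z^2-1)\}$ after $\wp=e_1z$, and your genus count, end structure, degree count $\deg(\overline g)=3k+2$, and the symmetry argument via Proposition~\ref{Karcher.1989} and Schwarz reflection all parallel the paper. The genuine gap is in the period problem, which you yourself flag as the main obstacle but then dispose of by an invalid reduction. You claim that because the $\phi_j$ transform nicely under the deck map, ``the real period of any cycle is governed by its image in the quotient, collapsing the computation to the two torus periods already evaluated in Proposition~\ref{prop6}.'' But the quotient of $\overline{M}_k$ by the $\mathbb{Z}_{k+1}$ deck group is the \emph{sphere}, not a torus: there is no torus in the picture for $k\geq 2$, the sphere carries no nontrivial cycles, and cycles of $\overline{M}_k$ do not descend to anything on which Proposition~\ref{prop6} could be applied. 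What the $\mathbb{Z}_{k+1}$-equivariance of $\eta$ and $\eta_2$ actually buys is only that the $2k$ period conditions follow from the conditions on a single pair of cycles $\tilde\gamma_1^j,\tilde\gamma_2^j$; the two remaining integrals are genuine genus-$k$ periods and must still be evaluated. They are not elliptic integrals: in the paper they are handled by adding exact one-forms to $\eta$ and $\eta_2$ to remove divergences (the Hoffman--Meeks trick, equations \eqref{eta2}--\eqref{Eta_eta1}), after which the periods reduce to real beta-type integrals $A<0$ and $B<0$, giving
\begin{equation*}
\int_{\Gamma_1^j}\eta=c_1A\,\frac{4k+3-2x^2-x^4}{4k},\qquad \int_{\Gamma_1^j}\eta_2=\overline{c_1}\,B .
\end{equation*}
Only this computation produces the solvability condition $c^2=\frac{4k+3-2x^2-x^4}{4k}\cdot\frac{A}{B}>0$, hence the existence of $c(k,x)$ and the precise range $|x|<\sqrt{2\sqrt{k+1}-1}$. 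Without it, your argument asserts rather than proves the central existence statement.

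A secondary omission: you invoke a ``residue-free condition at the punctures'' but give no argument. In the genus-one case the paper could argue directly from pole orders, but for $\Sigma_{k,x}$ the paper kills the period vector of small loops around $\mathfrak{p}_{\pm1},\mathfrak{p}_\infty$ by a symmetry argument (the vector must be perpendicular to two non-parallel reflection planes $P_1$, $P_2$, hence zero, via Lemma~2 of \cite{Wohlgemuth.1997}); some such argument is needed in your write-up as well, since the local forms do have poles of high order at the ends and the closing of the surface around the punctures is not automatic.
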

\begin{proof}
For every integer $k \geq 1$,  we consider the closed Riemann surface of genus $k$ given by
\begin{equation}\label{RS}
 \overline{M}_k = \{(z,w) \in \mathbb{C}_\infty^2: w^{k+1} = z^k \, (z^2 - 1)\} .
\end{equation}
Let 
\begin{equation}
   \mathfrak{p}_0 = (0,0), \qquad \mathfrak{p}_{-1}=(-1,0), \qquad \mathfrak{p_1}=(1,0), \qquad \mathfrak{p}_{\infty}=(\infty, \infty)
\end{equation}
and we define $M_k=\overline{M}_k-\{\mathfrak{p}_{-1},\mathfrak{p}_1, \mathfrak{p}_{\infty}\}.$
 According to Enneper-Weierstrass Representation Theorem~\ref{W}, the Weierstrass data $(g,\eta)$ which we will use to construct the one-parameter family of minimal surfaces $\Sigma_{k,t}=X(M_k)$  is 
\begin{equation}\label{22}
\left\{
\begin{aligned}
	 g &=\Big( \frac{z^2-1}{z^2-t^2} \Big)\frac{c}{w},\\
	\mathbf \eta&=\frac{(z^2 - t^2)^2}{(z^2-1)^2} \left(\frac{z}{w}\right)^k dz,
	\end{aligned}
	\right.
\end{equation}
where $c\in\r$ is the unique positive constant for which we will prove that the immersion $X: M_k\to \r^3$ is well-defined on $M_k$.\\
 
In Table~\ref{table2} we summarize the behavior of zeros and poles of $g$, $\eta$ and $dh$ at the saddles $\mathfrak{p}_{-t}$,  $\mathfrak{p}_{0}$,  $\mathfrak{p}_{t}$, $t \neq \pm 1$, and at the ends $\mathfrak{p}_{-1}, \mathfrak{p}_1$  and $\mathfrak{p}_{\infty}$.
\begin{table}[h!]\caption{}
\renewcommand*{\arraystretch}{1.3}
\centering
\renewcommand{\tabcolsep}{3.6 pt}
\begin{tabular}{|c|c|c|c|c|c|c|c|}
 \hline
 \quad  $(z,w)$ & $\mathfrak{p}_{-1}$ & $\mathfrak{p}_{-t}$  &  $\mathfrak{p}_{0}$ & $\mathfrak{p}_{t}$ & $\mathfrak{p}_{1}$ & $\mathfrak{p}_{\infty}$\\
 \hline
 $g$ & $0^k$ & $\infty^{k+1}$  & $\infty^k$ & $\infty^{k+1}$ & $0^k$ & $0^{k+2}$\\
 \hline
 $\eta$ & $\infty^{2k+2}$ & $0^{2k+2}$  & $0^{2k}$ & $0^{2k+2}$ & $\infty^{2k+2}$ & $\infty^{2}$\\
 \hline
  $dh$ & $\infty^{k+2}$ & $0^{k+1}$  & $0^k$ & $0^{k+1}$ & $\infty^{k+2}$ & $0^{k}$\\
 \hline
 Ends/saddle & Enneper end  &  regular point  & regular point & regular point  &  Enneper end & planar end \\
 \hline
\end{tabular}
\label{table2}
\end{table} 

In this case the degree of $g$ on $\overline{M}_k$ is $\mathrm{deg}(\mathfrak{g})=3k+2$ and $C_T(S) = -4\pi\,(3k+2)$. From the Gackstatter \cite{Gackstatter.1976}, Jorge-Meeks \cite{Jorge.1983} formula given by
\begin{equation}\label{FJMG}
C_T(S)=2\pi \Bigg(2-2\mathbf{g}-N-\sum_{\nu=1}^N k_\nu\Bigg), 
\end{equation}
where $k_{\nu}$ is the order of the end,  it results that the order of the Enneper ends is $2k+1$ and the order of the flat end is $1$. Thus all the minimal surfaces $\Sigma_{k,t}$, $t \neq \pm 1$,  are not embedded. \\

If $t=\pm 1,$ then we reobtain the Costa-Hoffman-Meeks Weierstrass data
\begin{equation}\label{WR-HM}
\left\{
\begin{aligned}
	 g &= \frac{c}{w},\\
	\mathbf \eta&=\left(\frac{z}{w}\right)^k dz
	\end{aligned}
	\right.
\end{equation}
and from Table~\ref{table2} we also find the behavior of zeros and poles of $g$, $\eta$ and $dh$ at $\mathfrak{p}_{-1}$, $\mathfrak{p}_{0}$, $\mathfrak{p}_1$  and $\mathfrak{p}_{\infty}$,  see Table~\ref{table3} below. 

\begin{table}[h!]\caption{}
\renewcommand*{\arraystretch}{1.3}
\centering
\begin{tabular}{|c|c|c|c|c|}
 \hline
 \quad  $(z,w)$ & $\mathfrak{p}_{-1}$  &  $\mathfrak{p}_{0}$ & $\mathfrak{p}_{1}$ & $\mathfrak{p}_{\infty}$\\
 \hline
 $g$ & $\infty$ & $\infty^{k}$  & $\infty$ & $0^{k+2}$\\
 \hline
 $\eta$ & * & $0^{2k}$  & * & $\infty^{2}$\\
 \hline
  $dh$ & $\infty$ & $0^{k}$ & $\infty$ & $0^{k}$\\
 \hline
 Ends/saddle & catenoidal end  & regular point & catenoidal end  & planar end \\
 \hline
\end{tabular}
\label{table3}
\end{table} 

Making use of \eqref{22} we may write
$$\left\{\begin{aligned}
\phi_1&=\eta-c^2\,\eta_2,\\
\phi_2&=i\,(\eta+c^2\,\eta_2), \\
\phi_3&=\frac{c}{2}\left(\frac{1-t^2}{(z-1)^2}+ \frac{1-t^2}{(z+1)^2}-\frac{1+t^2}{z+1}+\frac{1+t^2}{z-1} \right)\, dz,
\end{aligned}
\right.$$
where 
$$\eta=\frac{(z^2-t^2)^2}{(z^2-1)^3}w\, dz\qquad \text{and}\qquad \eta_2=\frac{dz}{w\, (z^2-1)}.$$
Integrating and taking real parts, we have
$$
\Re\int_{z_0}^z \phi_3=\frac{c (t^2+1)}{2} \ln\left|\frac{z-1}{z+1} \right|+c (t^2-1)\,\Re\left(\frac{z}{z^2-1}\right). 
$$
showing that there are no periods on $x_3$-axis, reducing the analysis of the period to ensure that 
\begin{equation}\label{una}
\int_{\tilde{\gamma}_1^j} \eta=c^2\overline{\int_{\tilde{\gamma}_1^j} \eta_2}, \qquad
\int_{\tilde{\gamma}_2^j} \eta=c^2\overline{\int_{\tilde{\gamma}_2^j} \eta_2},
\end{equation}
where $\{\tilde{\gamma}_1^j,\tilde{\gamma}_2^j\},\ j=1,2, \ldots, k,$ is a homology basis $\{\tilde{\gamma}_1^j,\tilde{\gamma}_2^j\}$ of  $\overline{M}_k$.  This basis may be constructed as follows.  Let $\gamma_1^j$, $\gamma_2^j, \ j=1,2, \ldots, k,$ be  the oriented simple closed curves in the $z$-plane as in Figure~\ref{BH}. 
\begin{figure}[h!]
\includegraphics[totalheight=3.2cm]{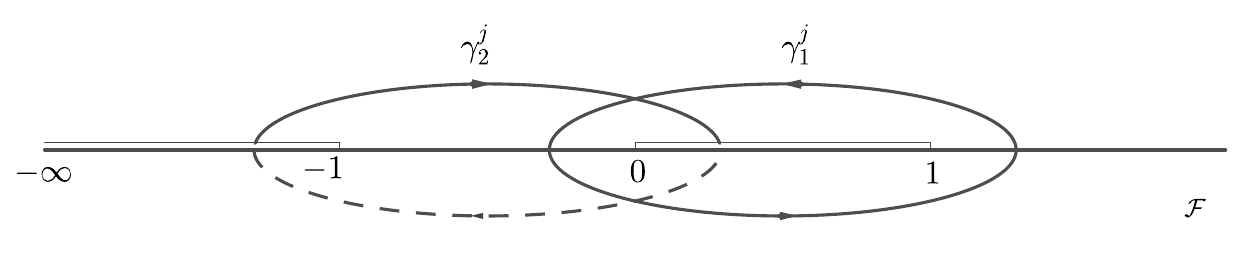}
\caption{$\mathcal{F}$-sheets on $\overline{M}_k.$}\label{BH}
\end{figure} 

The transition from dashed to continuous line on ${\gamma}_2^j$ indicates a change of sheet on $\overline{M_k}$. Let $\tilde{\gamma}_1^j$ and $\tilde{\gamma}_2^j,\ j=1,2, \ldots, k,$ be  the unique lifts of $\gamma_1^j$ and $\gamma_2^j$ to $\overline{M}_k$, respectively.

The curves $\gamma_1^j$ and $\gamma_2^j$ are homotopic to 
$\Gamma_1^j=\{z:|z|=\varepsilon\} \cup \overrightarrow{[0,1]} \cup \{z:|z-1|=\varepsilon\} \cup \overleftarrow{[0,1]}$ and 
$\Gamma_2^j=\{z:|z+1|=\varepsilon\} \cup \overrightarrow{[-1,0]} \cup \{z:|z|=\varepsilon\} \cup \overleftarrow{[-1,0]}$,  for sufficiently small  $\varepsilon >0$.   Therefore, the computation  of the integrals in \eqref{una} on $\tilde{\gamma}_1^j$ gives
\begin{equation}\label{int_G1Et}
\int_{\Gamma_1^j} \eta= \int_{|z|=\varepsilon} \eta + \int_{\overrightarrow{[0,1]}} \eta +\int_{|z-1|=\varepsilon} \eta +
\int_{\overleftarrow{[0,1]}} \eta
\end{equation}
and
\begin{equation}\label{int_G1Et2}
\int_{\Gamma_1^j} \eta_2= \int_{|z|=\varepsilon} \eta_2 + \int_{\overrightarrow{[0,1]}} \eta_2 +\int_{|z-1|=\varepsilon} \eta_2 +
\int_{\overleftarrow{[0,1]}} \eta_2.
\end{equation} 
The integrals in  \eqref{una} for $\tilde{\gamma}_2^j$ can be expressed in a similar way. In order to avoid a divergent integral, Hoffman and Meeks added an exact one-form to $\eta_2$ (see p. 17 of \cite{Hoffman.1990}),  that is,
\begin{equation}\label{eta2}
\eta_2=-\frac{dz}{2w}-\frac{k+1}{2}\,\mathrm{d}\bigg(\frac{z}{w}\bigg).
\end{equation}
In our case, we apply this technique to $\eta$. As $$\mathrm{d} w=\frac{w}{k+1}\bigg(\frac{k}{z}+\frac{2z}{z^2-1}\bigg)\,dz,$$ we obtain that
\begin{equation}\label{due}
\mathrm{d}\bigg(\frac{z w}{z^2-1}\bigg)=-\frac{2k}{(k+1)}\frac{z^2w\, dz}{(z^2-1)^2}+\frac{(2k+1)}{k+1}\frac{w\, dz}{(z^2-1)}
\end{equation}
and
\begin{equation}\label{F_eta3}
\mathrm{d}\bigg(\frac{z w}{(z^2-1)^2}\bigg)=-\frac{2 (2k+1)}{k+1}\eta_3-\frac{(2k+1)}{k+1}\frac{w\, dz}{(z^2-1)^2},
\end{equation}
where $$\eta_3:=\frac{w\, dz}{(z^2-1)^3}.$$
Also, we observe that \eqref{F_eta3} is equivalent to the following identity
\begin{equation}\label{F_eta3bis}\frac{w\, dz}{(z^2-1)^2}=-2\eta_3-\frac{k+1}{(2k+1)}\,\mathrm{d}\bigg(\frac{z w}{(z^2-1)^2}\bigg).
\end{equation}
Consequently,  from \eqref{due} and \eqref{F_eta3}, we get
$$\mathrm{d}\bigg(\frac{z w}{(z^2-1)^2}\bigg)-\frac{2k+1}{2k}\,\mathrm{d}\bigg(\frac{z w}{z^2-1}\bigg)=-\frac{2 (2k+1)}{k+1}\eta_3-\frac{(2k+1)}{2k (k+1)}\frac{w\, dz}{(z^2-1)}$$
and, thus,  we have the following expression 
\begin{equation}\label{eta3}
\eta_3=\frac{k+1}{4k}\,\mathrm{d}\bigg(\frac{z w}{z^2-1}\bigg)-\frac{w\, dz}{4k (z^2-1)}-\frac{k+1}{2 (2k+1)}\,\mathrm{d}\bigg(\frac{z w}{(z^2-1)^2}\bigg).
\end{equation}
Finally,  decomposing $\eta$ in partial fractions,  using \eqref{eta3} and \eqref{F_eta3bis} it results that
$$
\begin{aligned}
\eta&=\frac{w\, dz}{z^2-1}+2 (1-t^2)\frac{w\, dz}{(z^2-1)^2}+(1-t^2)^2\,\eta_3\\
&=\frac{w\, dz}{z^2-1}-\frac{2(k+1)(1-t^2)}{(2k+1)}\,\mathrm{d}\bigg(\frac{z w}{(z^2-1)^2}\bigg)-(1-t^2)(3+t^2)\,\eta_3
\end{aligned}
$$
and so
\begin{equation}\label{Eta_eta1}
\begin{aligned}
\eta=&\frac{(4k+3-2t^2-t^4)}{4k}\frac{w\, dz}{z^2-1}-\frac{(k+1)(1-t^2)^2}{2(2k+1)}\,\mathrm{d}\bigg(\frac{z w}{(z^2-1)^2}\bigg)+\\
&-\frac{(k+1)(1-t^2)(3+t^2)}{4k}\,\mathrm{d}\bigg(\frac{z w}{z^2-1}\bigg).
\end{aligned}
\end{equation}
Let $\omega$ to be the branch of $w = \left[z^k \, (z^2 - 1)\right]^{1/(k+1)}$, defined on $\mathbb{C} - \{(-\infty,-1]\cup [0,1] \}$, such that
$$\lim_{\varepsilon \rightarrow 0^+} \arg\Big(\omega(\frac{1}{2}-i\,\varepsilon )\Big)=-\frac{\pi i}{k+1}.$$ Hence, from \eqref{Eta_eta1} and using the computations on p.  17 of \cite{Hoffman.1990},  we conclude that the integral in the left-hand side of \eqref{una} is equal to
$$\int_{\Gamma_1^j} \eta=\frac{(4k+3-2t^2-t^4)}{4k}\int_{\Gamma_1^j} \frac{w\, dz}{z^2-1}=c_1\, A\,\frac{(4k+3-2t^2-t^4)}{4k},$$
where $$c_1=e^{-\pi i/(k+1)}-e^{\pi i/(k+1)}\qquad \text{and}\qquad A=\int_0^1\frac{[u^k\, (1-u^2)]^{1/(k+1)}}{u^2-1}\,du<0.$$
Moreover,  as
$$\int_{\Gamma_1^j} \eta_2=\overline{c_1}\, B,\qquad B=-\frac{1}{2}\int_0^1[u^k\, (1-u^2)]^{-1/(k+1)}\,du<0,$$
substituting in \eqref{una},  we conclude that we can choose 
\begin{equation}\label{const_c}
c(k,t)=\sqrt{\frac{4k+3-2t^2-t^4}{4k}}\, \sqrt{\frac{A}{B}}, \qquad |t|<\sqrt{2\sqrt{k+1}-1}.
\end{equation}

To describe the symmetries of $\Sigma_{k,t}$, we start determining the expressions of the differential $dh=g\cdot\eta$ and of $(dg\cdot \eta)$ as
$$\begin{aligned}
dh&=c\, \frac{(z^2 - t^2)}{(z^2-1)^2}\, dz,\\
dg\cdot \eta&=c\,\Bigg[\frac{2(1-t^2)\, z}{(z^2-1)^3}-\Big(\frac{k}{k+1}\Big)\, \frac{(z^2-t^2)}{z \,(z^2-1)^2}-\Big(\frac{2}{k+1}\Big)\, \frac{(z^2-t^2)\,z}{(z^2-1)^3}\Bigg]\,dz^2.
\end{aligned}$$ 
In Table~\ref{table4} we summarize the behavior of $g$, $dh$ and  $dg\cdot \eta$ along each path $\sigma_j$, $j=1,2,3$.
\begin{table}[h!]\caption{}
\renewcommand*{\arraystretch}{1.7}
\centering
\begin{tabular}{|c|c|c|c|}
 \hline
 \quad \text{Path}\; $\sigma_j$ & \quad $g \circ \sigma_j$  \quad & \quad $dh(\sigma_j')$\quad  & \quad $(dg\cdot \eta)(\sigma_j')$ \quad \\
\hline
 $\sigma_1(u)=u, \ 1< u<\infty$  & $ \r$ & $\r$ & $\r$  \\ 
\hline 
 $\sigma_2(u)=u, \ 0< u <1$  & $e^{-\frac{\pi i}{(k+1)}}\,\r$ & $\r$ & $\r$  \\  
 \hline 
$\sigma_3(u)=i\,u, \ 0< u<\infty$ & $\;e^{-\frac{(k+2)\pi i}{2 (k+1)}}\,\r\;$ & $i \r$ & $i\r$  \\ 
 \hline
\end{tabular}
\label{table4}
\end{table}

Now we study the real part of the period vector of not homotopically trivial closed curves around each puncture of $M_k$, namely $\mathfrak{p}_{-1}$,  $\mathfrak{p}_1$ and $\mathfrak{p}_{\infty}.$ According to Table~\ref{table4} and 
Proposition~\ref{Karcher.1989},  the curves $X \circ \sigma_j , j = 1,2$,  are planar geodesics  and  $X \circ \sigma_3$ is a straight line on the fundamental piece of $\Sigma_{k,t}.$ Thus,  the planes $P_1:= (x_1, x_3)$-plane and $P_2= -L_{\theta} \cdot P_1$ are reflective planes of symmetry containing the curves $X \circ \sigma_j , j = 1,2$,  respectively.  

Therefore,  a small curve $\sigma$ on $M_k$ around $\mathfrak{p}_{-1}$ is, after homology, invariant under reflections in $\sigma_j , j = 1,2$.  By virtue of this,  the period vector 
$\Re \int_{\sigma}(\phi_1, \phi_2, \phi_3)$ is perpendicular to the planes $P_1$ and $P_2$. But these planes are not parallel, because $P_1$ and $P_2$ make an angle of $\theta=\pi/(k+1)$.  Since the period vector $\Re \int_{\sigma} (\phi_1, \phi_2, \phi_3)$ must be perpendicular to both planes (see Lemma~$2$ in \cite{Wohlgemuth.1997}), we conclude that it is zero. We can apply the same argument as above to conclude that the period vector will be also zero in the other cases. \\

Let $\kappa(z,w)=(\overline{z},\overline{w})$ and $\lambda(z,w)=\left(-z, e^{i\,k\,\theta} w\right)$,  with $\theta = \pi/(k+1)$,  be the conformal mappings of $\mathbb{C}_\infty^2$, as defined in \cite{Hoffman.1990} (see p. 13).  Due to the symmetries of $M_k$ (see Corollary 3.2 in \cite{Hoffman.1990}) the group generated by $\kappa$ and $\lambda$ is the dihedral group $\mathcal{D}(2k+2)$ with $4(k+1)$ elements. By a straightforward calculation, we deduce 

$$
\left\{
\begin{aligned}
\lambda^{*}\phi_1&=\cos \theta \, \phi_1 - \sin \theta \, \phi_2,   \\
\lambda^{*}\phi_2&=\sin \theta \, \phi_1 + \cos \theta \, \phi_2,  \\
\lambda^{*}\phi_3&= - \phi_3,
\end{aligned}
\right.
\qquad \qquad 
\left\{
\begin{aligned}
\kappa^{*}\phi_1&= \overline{\phi_1}, \\
\kappa^{*}\phi_2&= - \overline{\phi_2},  \\
\kappa^{*}\phi_3&=  \overline{\phi_3}.
\end{aligned}
\right.
$$

Thus, $X \circ \lambda = L_\theta\,  X^t $ and $X \circ \kappa = K\,  X^t,$ where $ L_\theta$ and $K$ are the real orthogonal matrices defined in \eqref{matrix}.
Therefore, the group $\mathcal{D}(2k+2)=\langle L_\theta^j\, K^\ell \rangle, \ j=0, \ldots , (2k+1)$,  $\ell=0,1$,  acts by isometries on $\Sigma_{k,t}$,  which can be decomposed in $4(k+1)$ congruent pieces. See $X(\Omega)$ in Figure~\ref{FR}.  
In fact, using similar arguments as in the proof of Proposition 4.5 from \cite{Hoffman.1990}, we have that $\mathcal{D}(2k + 2)$ is the symmetry group of $\Sigma_{k,t}=X(M_k)$.

\begin{figure}[h!]
\includegraphics[totalheight=6.7cm]{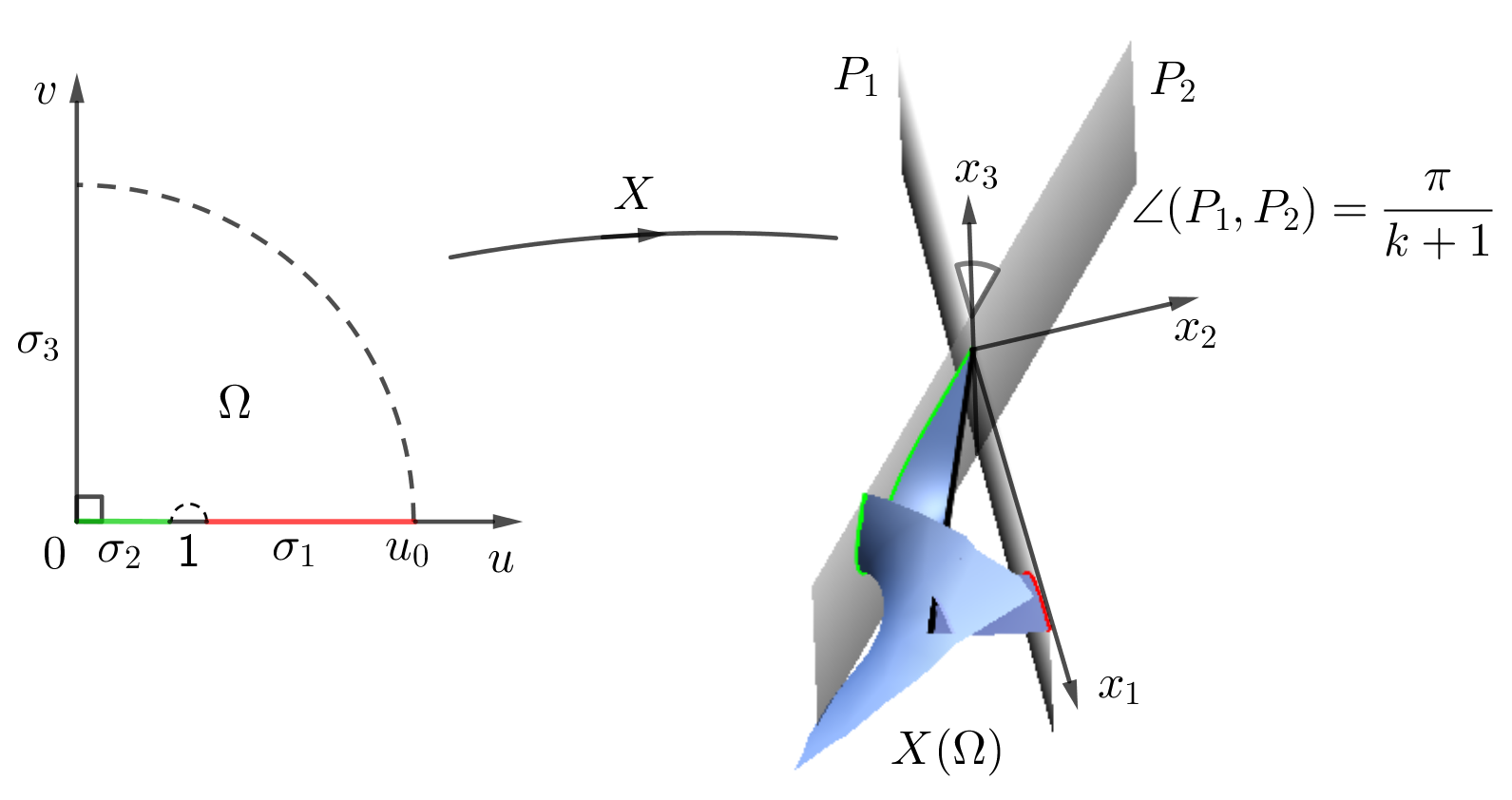}
\caption{Fundamental piece.}
\label{FR}
\end{figure}

Then,  we may conclude that the vertical planes, containing   the $x_3$-axis,  given by $P_{j+1}= -L_\theta\cdot P_j$,  $j=1,\dots, k$,  are planes of symmetry for the surfaces $\Sigma_{k,t}$ which also have $(k+1)$ straight lines of symmetry.  The regularity and the completeness of $\Sigma_{k,t}$ are easy to see.  This completes the proof of the theorem.
\end{proof}

\begin{remark}
For each $t$, the surfaces $\Sigma_{1,t}$ is $S_x^1$, given in Theorem~\ref{teo3}. In particular,  $\Sigma_{1,1}$ is the Costa surface.
\end{remark}

Making use of  \eqref{22}  and  \eqref{const_c},  a straightforward computation shows that
$$c(k,t)=2^{\frac{1}{k+1}}\sqrt{\frac{4k+3 - 2t^2-t^4}{4k}}\;\frac{\Gamma\left(\frac{k + 2}{2k + 2}\right)}{\Gamma\left(\frac{2k+3}{2k + 2}\right)}\,\sqrt{\frac{1}{2k+2}\cot\Big(\frac{\pi}{2k+2}\Big)},$$
where $\Gamma$ is the standard Gamma function. Furthermore,
\begin{equation}\label{g_inf}
\lim_{k \rightarrow \infty } c(k,t) =1,  \quad \lim_{k \rightarrow \infty } w(z) =z, \quad  \lim_{k \rightarrow \infty } g(z) = \frac{z^2-1}{z(z^2-t^2)}, \quad \lim_{k \rightarrow \infty } \eta(z) = \frac{z(z^2-t^2)^2}{(z^2-1)^3}.
\end{equation}

Thus,  if $M_k$ is scaled so that the maximum value of the Gauss
curvature is equal to $-1$ at the origin and $|t|=1$, then we have the  Weierstrass data of the Scherk's Fifth surface (see Figure~\ref{Figure_b1}) 
$$(g,\eta) = \Big(\frac{1}{z},\lambda\, \frac{z}{z^2-1}\, dz \Big), \quad \lambda \in \mathbb{R}.$$ 

Up to a suitable motion in $\mathbb{R}^3$, one can show that this pair $(g,\eta)$
produces the same minimal surface as $\cos x_2+\sinh x_1\,  \sinh x_3=0$.  Thus, we reobtain one of the limits obtained by D. Hoffman and W.H. Meeks III,  according to Theorem 3.1  in \cite{Hoffman_Meeks.1990}.\\

Now, if $|t|\neq 1$ we obtain the Weierstrass data for the family of the  Scherk-Enneper surfaces (see Figure~\ref{Figure_a1}),  which indicates that this is a possible limit, when properly normalized. Still with $|t|\neq 1$, it is not know to us whether, after a rescaling in $M_k$ and taking $k \rightarrow \infty$,  we obtain convergence to the singular configuration given by a vertical double Enneper surface of higher dihedral symmetry (see \cite{Karcher.1989}) and a horizontal plane.

\begin{figure}[h]
\subfigure[\label{Figure_a1}]{
\includegraphics[totalheight=3.3cm]{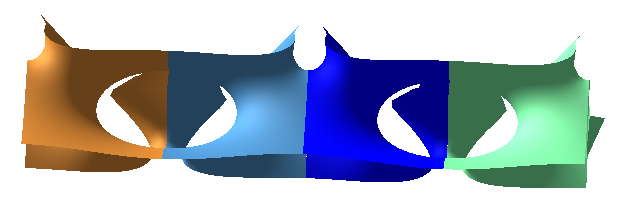}}
\subfigure[\label{Figure_b1}]{
\includegraphics[totalheight=2.8cm]{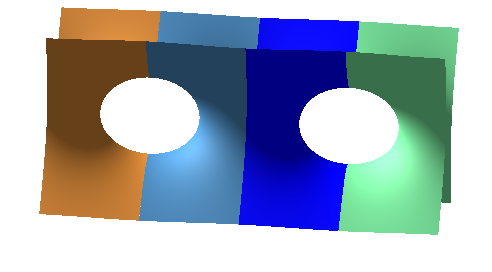}}
\caption{Computer graphics of $\Sigma_{\infty,t}$ via the data given in  \eqref{g_inf} for: (a) $t=0.8$ (Scherk-Enneper surface) and (b) $t=1$ (Scherk's Fifth surface).}
\label{Fig_Sup_Lim}
\end{figure}

\begin{remark}
In \cite{Hoffman_Meeks.1990}  D.  Hoffman and W.H. Meeks did a thorough study of limits to $\Sigma_{k,1}=M_k$. 
As $k \rightarrow \infty$, suitable scalings of the $M_k$ converge either to the singular configuration
given by a vertical catenoid and a horizontal plane passing through its waist circle, or to the singly-periodic Scherk minimal surface (see also \cite{MeeksIII.2012}). 
\end{remark}

\begin{acknowledgements}
We would like to thank  Alexandre Lymberopoulos from  University of S\~ao Paulo (Brazil) and the referee  for their careful reading of the original manuscript and comments which improved the paper.
\end{acknowledgements}

\bibliographystyle{amsplain} 
\bibliography{Bib_Minimal_Surfaces}

\end{document}